\documentclass[12pt]{amsart}

\usepackage{amsmath}
\usepackage{amsmath,amsfonts,amssymb,amsthm}
\usepackage{graphicx,color}
\DeclareMathRadical{\sqrtsign}{symbols}{"70}{largesymbols}{"70}
\providecommand{\abs}[1]{\lvert#1\rvert}

\newlength{\figboxwidth}             
\newcommand{\infinity}{\infty}

\def\@ifundefined#1#2#3%
  {\expandafter\ifx\csname#1\endcsname\relax#2\else#3\fi}

\@ifundefined{theoremstyle}{
}{
\theoremstyle{plain} 
}
\newtheorem{theorem}{Theorem}[section]

\newtheorem{proposition}[theorem]{Proposition}
\newtheorem{lemma}[theorem]{Lemma}

\newtheorem{corollary}[theorem]{Corollary}

\@ifundefined{theoremstyle}{
}{
\theoremstyle{definition} 
}

\newtheorem{remark}[theorem]{Remark}

\mathchardef\GG="321D
\newcommand{\mcc}[1]{{}}

\numberwithin{equation}{section}

\title[Dynamical spectra for skew products with unit circle fiber]
{Dynamical spectra for skew products with unit circle fiber}

\author{Carlos Gustavo Moreira}   

\address{Carlos G. Moreira: 
IMPA, Instituto de Matem\'atica Pura e Aplicada, Rio de Janeiro, 22460320, Brazil.
}
\email{gugu@impa.br}

\author{Christian Camilo Silva Villamil}  

\address{Christian Camilo Silva Villamil: 
 Shenzhen International Center for Mathematics, Southern University of Science and Technology, Shenzhen, 518000, China.}
\email{ccsilvav@sustech.edu.cn}

\author{Raúl Ures}
\address{Raúl Ures: Department of Mathematics and Shenzhen International Center for Mathematics, Southern University of Science and Technology, Shenzhen, 518000, China.}
\email{ures@sustech.edu.cn}

\keywords{Hausdorff dimension, horseshoes, Lagrange spectrum, skew products, irrational rotations}

\begin{document}

\begin{abstract}
The dynamical Markov and Lagrange spectra are subsets of the real line widely studied and that share some similarities with the classical spectra, e.g. typical dynamical spectra, associated to horseshoes with Hausdorff dimension greater than one, have nonempty interior and, in the conservative setting, the intersections of the spectra with half-lines determine a continuous function. Here we study some similar questions in the context of skew products of surface diffeomorphisms with unit circle fiber. 
\end{abstract}

\maketitle

\tableofcontents

\section{Introduction}\label{s.introduction}

The classical Lagrange and Markov spectra are closed subsets of the real line related to Diophantine approximations. They arise naturally in the study of rational approximations of irrational numbers and of indefinite binary quadratic forms, respectively. 

It is worth to point out here that the Lagrange and Markov spectra can be defined in the following \emph{dynamical} way in terms of the continued fraction algorithm: denote by $[a_0,a_1,\dots]$ the continued fraction $a_0+\frac{1}{a_1+\frac{1}{\ddots}}$. Let $\Sigma=\mathbb{N}^{\mathbb{Z}}$ be the space of bi-infinite sequences of positive integers, $\sigma:\Sigma\to\Sigma$ be the left-shift map given by  $\sigma((a_n)_{n\in\mathbb{Z}}) = (a_{n+1})_{n\in\mathbb{Z}}$, and let $f:\Sigma\to\mathbb{R}$ be the function
$$f((a_n)_{n\in\mathbb{Z}}) = [a_0, a_1,\dots] + [0, a_{-1}, a_{-2},\dots].$$
Then, 
$$\mathcal{L}=\left\{\limsup_{n\to\infty}f(\sigma^n(\theta))<\infty:\theta\in\Sigma\right\} \quad \textrm{and} \quad \mathcal{M}= \left\{\sup_{n\to\infty}f(\sigma^n(\theta))<\infty:\theta\in\Sigma\right\}.$$

The reader can find more information about the structure of these sets in the classical book \cite{CF} of Cusick and Flahive, but let us mention here that:
\begin{itemize}
\item Markov showed that $\mathcal{L}\cap(-\infty, 3)=\mathcal{M}\cap(-\infty, 3)=\{\sqrt{9-4/z_n^2}:n\in \mathbb{N}\}$ where $z_n$ are the \emph{Markov numbers}, that is, the largest coordinate of a triple $(x_n,y_n,z_n)\in \mathbb{N}^3$ verifying the Markov equation 
$$x_n^2+y_n^2+z_n^2=3x_ny_nz_n.$$
\item Hall showed that $\mathcal{L}$ ($\subset\mathcal{M}$) contains a half-line and Freiman determined the biggest half-line contained in the spectra, namely $[c,+\infty)$ where 
$$c=\frac{2221564096+283748\sqrt{462}}{491993569}\simeq 4.52782956\dots$$ 
\item Moreira proved in \cite{M3} several results on the geometry of the Markov and Lagrange spectra, for example, that the map $d:\mathbb{R} \rightarrow [0,1]$, given by
$$d(t)=HD(\mathcal{L}\cap(-\infty,t))= HD(\mathcal{M}\cap(-\infty,t)),$$
(where $HD(X)$ denotes the Hausdorff dimension of the set $X$) is continuous, surjective and such that $\max\{t\in\mathbb{R}:d(t)=0\}=3.$
\end{itemize}

In the sequel, we will consider the natural generalization of the classical Lagrange and Markov spectra in the context of smooth diffeomorphisms of compact riemannian manifolds: let $\Phi:M\rightarrow M$ be a diffeomorphism of a $C^{\infty}$ compact riemannian manifold $M$ and let $F:M\rightarrow \mathbb{R}$ be a continuous function. Following the above characterization of the classical spectra, we define the maps $\ell_{\Phi,F}: M \rightarrow \mathbb{R}$ and $m_{\Phi,F}: M \rightarrow \mathbb{R}$ given by $\ell_{\Phi,F}(x)=\limsup_{n\to \infty}F(\Phi^n(x))$ and $m_{\Phi,F}(x)=\sup_{n\in\mathbb{Z}}F(\Phi^n(x))$ for $x\in M$. Call $\ell_{\Phi,F}(x)$ the \textit{Lagrange value} of $x$ associated to $F$ and $\Phi$ and $m_{\Phi,F}(x)$ the \textit{Markov value} of $x$ associated to $F$ and $\Phi$. Given $X\subset M$, closed and $\Phi$-invariant, the sets 
$$\mathcal{L}_{X,\Phi,F}=\ell_{\Phi,F}(X)=\{\ell_{\Phi,F}(x):x\in X\}$$
and
$$\mathcal{M}_{X,\Phi,F}=m_{\Phi,F}(X)=\{m_{\Phi,F}(x):x\in X\}$$
are respectively called \textit{Lagrange Spectrum} of $(X,\Phi,F)$ and \textit{Markov Spectrum} of $(X,\Phi,F)$ . As in the classical spectra, it can be easily verified that $\mathcal{L}_{X,\Phi,F}\subset \mathcal{M}_{X,\Phi,F}$.

In this article, we will only consider the cases when $M=S$ is a compact surface, $\Phi=\varphi$ is a diffeomorphism of $S$ and $X=\Lambda$ is a horseshoe\footnote{i.e., a non-empty compact invariant hyperbolic set of saddle type which is transitive, locally maximal, and not reduced to a periodic orbit (cf. \cite{PT93} for more details).} for $\varphi$; or the case when $M=S\times \mathbb{S}^1$, the diffeomorphism $\Phi:S\times \mathbb{S}^1\rightarrow S\times \mathbb{S}^1$ is given by $\Phi(x,t)=(\varphi(x),R_x(t))$, where for $x\in M$, $R_x:\mathbb{S}^1\rightarrow \mathbb{S}^1$ is a diffeomorphism and $X=\Lambda\times \mathbb{S}^1$ (here $S$, $\varphi$ and $\Lambda$ are as before).

Given a diffeomorphism $\varphi:S\rightarrow S$ of a compact surface $S$ with a horseshoe $\Lambda$, let us remember the combinatorial nature of $\varphi|_{\Lambda}$. Fix a Markov partition $\{R_a\}_{a\in \mathcal{A}}$ of $\Lambda$ with sufficiently small diameter consisting of rectangles $R_a \sim I_a^s \times I_a^u$ delimited by compact pieces $I_a^s$, $I_a^u$, of stable and unstable manifolds of certain points of $\Lambda$, see \cite{PT93} Theorem 2, page 172. The set $\mathcal{B}\subset \mathcal{A}^{2}$ of admissible transitions consist of pairs $(a,b)$ such that $\varphi(R_a)\cap R_{b}\neq \emptyset$; so, we can define the transition matrix $B$ by
$$b_{ab}=1 \ \ \text{if} \ \  \varphi(R_a)\cap R_b\neq \emptyset \ \ \text{and}  \ b_{ab}=0  \ \text{otherwise, for $(a,b)\in \mathcal{A}^{2}$.}$$

Consider the space $\Sigma_{\mathcal{A}}=\left\{\underline{a}=(a_{n})_{n\in \mathbb{Z}}:a_{n}\in \mathcal{A} \ \text{for all} \ n\in \mathbb{Z}\right\}$ with the metric $d(\underline{a},\underline{b})=\max\{2^{-n^+_{\underline{a},\underline{b}}},2^{-n^-_{\underline{a},\underline{b}}} \}$ where $n^-_{\underline{a},\underline{b}}=\max\{n\in \mathbb{N}:a_{-n}=b_{-n} \}$ and $n^+_{\underline{a},\underline{b}}=\max\{n\in \mathbb{N}:a_{n}=b_{n} \}$ and consider the homeomorphism of $\Sigma_{\mathcal{A}}$, the shift, $\sigma:\Sigma_{\mathcal{A}}\to\Sigma_{\mathcal{A}}$ defined by $\sigma(\underline{a})_{n}=a_{n+1}$. Let $\Sigma_{\mathcal{B}}=\left\{\underline{a}\in \Sigma_{\mathcal{A}}:b_{a_{n}a_{n+1}}=1\right\}$; this set is a closed and $\sigma$-invariant subspace of $\Sigma_{\mathcal{A}}$. Still denote by $\sigma$ the restriction of $\sigma$ to $\Sigma_{\mathcal{B}}$. The pair $(\Sigma_{\mathcal{B}},\sigma)$ is a subshift of finite type, see \cite{{Shub}} chapter 10. The dynamics of $\varphi$ on $\Lambda$ is topologically conjugate to the sub-shift $\Sigma_{\mathcal{B}}$, namely, there is a homeomorphism $\Pi: \Lambda \to \Sigma_{\mathcal{B}}$, that is bi-Hölder, such that $\varphi\circ \Pi=\Pi\circ \sigma$.

It turns out that dynamical Markov and Lagrange spectra associated to hyperbolic dynamics are closely related to the classical Markov and Lagrange spectra. Several results on the Markov and Lagrange dynamical spectra associated to horseshoes in dimension 2 which are analogous to previously known results on the classical spectra were obtained recently: In \cite{MR2} it is shown that typical dynamical spectra associated to horseshoes with Hausdorff dimensions larger than one have non-empty interior (as the classical ones). In \cite{M4} it is shown that typical Markov and Lagrange dynamical spectra associated to horseshoes have the same minimum, which is an isolated point in both spectra and is the image by the function of a periodic point of the horseshoe. 

Moreira's theorem of \cite{M3} was generalized first in \cite{CMM16} in the context of {\it conservative} diffeomorphism with some horseshoe with Hausdorff dimension smaller than $1$ and later, the condition on the dimension of the horseshoe was removed in \cite{GCD}. More specifically, the authors proved that for typical choices of the dynamics and of the real function, the intersections of the corresponding dynamical Markov and Lagrange spectra with half-lines $(-\infty,t)$ have the same Hausdorff dimension, and this defines a continuous function of $t$.

Here we study some similar questions in the context of skew products of surface diffeomorphisms with unit circle fiber. However, in our main theorem we suppose that the circle diffeomorphism is an irrational rotation with angle that does not depend on the point in the surface. This theorem is quite related to the result of the previous paragraph on the interior of the spectra:

\begin{theorem}\label{principal1}
Let $\Lambda$ be a horseshoe of $\varphi:S\rightarrow S$ and $r\geq 2$, $\alpha\in \mathbb{R}\setminus \mathbb{Q}$ fixed. If $\varphi\times {\text{rot}}_{\alpha}:S\times \mathbb{S}^1\rightarrow S\times \mathbb{S}^1$ is given by $(\varphi\times {\text{rot}}_{\alpha})(x,t)=(\varphi(x),{\text{rot}}_{\alpha}(t))$, where ${\text{rot}}_{\alpha}$ is the rotation with angle $\alpha$, then there exists an open and dense set $\mathcal{R}_{\varphi,\Lambda}\subset C^r(S\times \mathbb{S}^1,\mathbb{R})$ such that for each $F\in \mathcal{R}_{\varphi,\Lambda}$:
 $$int( \mathcal{L}_{\Lambda\times \mathbb{S}^1,\varphi\times {\text{rot}}_{\alpha},F})\neq \emptyset.$$ 

Moreover, for $F\in\mathcal{R}_{\varphi,\Lambda}$ there is only one point of maximum $(\tilde{x},\tilde{t})$ and we can find a sequence of non-trivial intervals contained in $\mathcal{L}_{\Lambda\times \mathbb{S}^1,\varphi\times R_{\alpha},F}$ converging to the point of maximum $F(\tilde{x},\tilde{t})$. If $\tilde{x}$ is not periodic there is a non-trivial interval with right endpoint  $F(\tilde{x},\tilde{t})$ that is contained in $\mathcal{L}_{\Lambda\times \mathbb{S}^1,\varphi\times R_{\alpha},F}$.
    
\end{theorem}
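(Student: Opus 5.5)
The plan is to reduce the Lagrange spectrum of the skew product to sums of Cantor sets, using the fact that the fiber rotation is minimal and uniquely ergodic.

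\emph{Genericity conditions.} First I define $\mathcal{R}_{\varphi,\Lambda}$ by conditions that are open and dense in $C^r(S\times\mathbb{S}^1,\mathbb{R})$, $r\ge 2$:
\begin{itemize}
\item[(i)] $F|_{\Lambda\times\mathbb{S}^1}$ attains its maximum at a unique point $(\tilde x,\tilde t)$.
\item[(ii)] The maximum is nondegenerate in the fiber direction: $\partial_t^2F(\tilde x,\tilde t)<0$.
\item[(iii)] The restriction of $F(\cdot,\tilde t)$ to the local stable and unstable directions near $\tilde x$ satisfies the transversality condition (gradient not parallel to $E^s$ or $E^u$) used in the dynamical spectra literature.
\end{itemize}
Openness of these conditions is standard. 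Density follows by adding a small bump function localized near a chosen point of $\Lambda\times\mathbb{S}^1$.

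\emph{Key observation.} For $x\in\Lambda$ and $t\in\mathbb{S}^1$, the orbit of $(x,t)$ is $(\varphi^n(x),t+n\alpha)$. By minimality of $\text{rot}_\alpha$, the fiber coordinate becomes $\varepsilon$-close to $\tilde t$ infinitely often, along a set of times $n$ with bounded gaps. Now choose $x$ whose symbolic itinerary contains, at a sparse subsequence $n_k$ of such times, longer and longer blocks $\Pi(\tilde x)$-approximations centered at $n_k$. Choose the itinerary outside these blocks to stay in a subhorseshoe $\Lambda'$ far from $\tilde x$, where $F$ is much smaller. Then
\[
\ell(x,t)=\limsup_k F\bigl(\varphi^{n_k}(x),\,t+n_k\alpha\bigr).
\]

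\emph{Producing an interval.} I perturb the construction in two independent directions:
\begin{itemize}
\item the fiber offset $s=\lim (t+n_k\alpha-\tilde t)$, which can be chosen to range over a whole interval $[0,\delta]$ thanks to density of $\{n\alpha\}$ (subsequences with prescribed limit offsets);
\item the point $y=\lim\varphi^{n_k}(x)$, which ranges over a neighborhood of $\tilde x$ in $\Lambda$, giving a Cantor set of values.
\end{itemize}
Thus $\mathcal{L}$ contains $\{F(y,\tilde t+s):y\in\Lambda\cap U,\ s\in[0,\delta]\}$, provided the contributions of other times are controlled. Since $s\mapsto F(y,\tilde t+s)$ is continuous and nonconstant by (ii), this set contains a nontrivial interval, by the intermediate value theorem applied at any fixed $y$. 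The irrational rotation supplies a continuum parameter for free, so no Hausdorff-dimension condition is needed.

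\emph{Control of the limsup.} To make the limsup equal to the value at the $n_k$'s, I need that at all other times $F(\varphi^n(x),t+n\alpha)$ is smaller. Because $(\tilde x,\tilde t)$ is the unique maximizer, values near the maximum force the orbit to be near $(\tilde x,\tilde t)$. Taking $y$ near $\tilde x$ and $s$ small makes the realized value close to $\max F$. Near $(\tilde x,\tilde t)$ the only danger is the orbit of $\tilde x$ itself returning close to $\tilde x$, together with the fiber simultaneously returning close to $\tilde t$:
\begin{itemize}
\item If $\tilde x$ is not periodic, nearby points of $\Lambda$ do not return to a small neighborhood of $\tilde x$ for a long time. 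The values at intermediate times are then bounded away below $\max F$, and one fixed interval $[\max F-\eta,\max F]$ lies in the spectrum.
\item If $\tilde x$ has period $p$, the competing values are $F(\varphi^{jp}(y),\tilde t+s+jp\alpha)$. By irrationality these fiber positions are spread out, but can again be close to $\tilde t$. Hence I only obtain intervals $[\max F-\eta_k,\max F-\eta_k']$ accumulating at $\max F$, taking $s$ in windows where $s+jp\alpha$ avoids a neighborhood of $0$ for the relevant bounded range of $j$.
\end{itemize}

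The main obstacle is this control step. One must show that the block constructions and the intermediate itinerary never create larger values along the whole orbit, using hyperbolic distortion estimates (local product structure and bi-Hölder $\Pi$). In the periodic case it must be done with quantitative care about returns of $jp\alpha$ near $0$.
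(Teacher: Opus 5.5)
Getting the interval from the fiber parameter alone is the right idea, and it is what the paper does. But your proposal has a genuine gap at exactly the step you call the main obstacle, and the mechanisms you suggest for it fail as stated.

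\textbf{The limsup formula.} Your blocks around the times $n_k$ get longer and longer, with $\varphi^{n_k}(x)\to y$ and $t+n_k\alpha\to\tilde t+s$. Hence, for every fixed $i\in\mathbb{Z}$, the value at time $n_k+i$ converges to $F(\varphi^i(y),\tilde t+s+i\alpha)$. So $\ell(x,t)=\limsup_k F(\varphi^{n_k}(x),t+n_k\alpha)$ can only hold if $(y,\tilde t+s)$ realizes the supremum of $F$ along its \emph{entire} $(\varphi\times\text{rot}_\alpha)$-orbit. Letting $y$ range over a neighbourhood of $\tilde x$ therefore does not help: for generic $y$ the orbit of $(y,\tilde t+s)$ is dense, and then the limsup is $\max F$.

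\textbf{The non-periodic case.} Even with $y=\tilde x$ fixed, the remark that points near a non-periodic $\tilde x$ ``do not return for a long time'' only bounds return times finitely. That is useless against blocks of unbounded length; what you need is that $\tilde x$ is \emph{not recurrent}. This is where your condition (iii) enters, which you list but never use. Correctly stated, it is $DF_{(\tilde x,\tilde t)}(e^s_{\tilde x})\neq0$ and $DF_{(\tilde x,\tilde t)}(e^u_{\tilde x})\neq0$, not ``gradient not parallel''. It forces $\tilde x$ to be a boundary point in both directions, i.e.\ a corner of a rectangle. Hence $\tilde x\in W^s(q_2)\cap W^u(q_1)$ for periodic points $q_1,q_2$. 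If $\tilde x$ is not periodic, its other iterates and the periodic and connecting orbits used for gluing stay at distance $\ge\delta>0$ from $\tilde x$. One then uses the \emph{exact} itinerary $X_1^\infty;HX_2^\infty$ of $\tilde x$, repeated as $HX_2^{n_{1,s}}QX_1^{n_{2,s}}H\cdots$, and varies only the fiber parameter. This gives $F(\tilde x,I)\subset\mathcal{L}$, an interval with right endpoint $\max F$.

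\textbf{The accumulating intervals.} In particular when $\tilde x$ is periodic, your windows in $s$ are never constructed. The range of $j$ to control grows without bound as $y\to\tilde x$, and the recurrence problem for $y$ reappears. The paper avoids all Diophantine bookkeeping as follows.
\begin{itemize}
\item Take a periodic $q$ with $\tilde x\notin\mathcal{O}(q)$ and a homoclinic point $z\in W^s(q)\cap W^u(q)$ close to $\tilde x$, with $\Pi(\varphi^{-j}(z))=Q^\infty;HQ^\infty$.
\item The orbit of $z$ meets $B(\tilde x,\delta)$ only in the finite window $H$. So for $t$ in a small interval, the supremum of $F$ along the orbit of $(\varphi^{-j}(z),t)$ is attained at one of finitely many positions.
\item The closed sets $\mathcal{A}_r$ (parameters where position $r$ wins) cover that interval, so some $\mathcal{A}_{j_0}$ has nonempty interior.
\item On it, $t\mapsto F(\Phi^{j_0}(\varphi^{-j}(z),t))$ is non-constant because $\partial_t^2F<0$ near $(\tilde x,\tilde t)$.
\item Each such value is the Lagrange value of $z_t=\Pi^{-1}(Q^\infty;HQ^{n_1}HQ^{n_2}H\cdots)$. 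Here $n_s\to\infty$ is chosen, using density of $\{n\abs{Q}\alpha\}$, so that the fiber returns to $t$ at every copy of $H$.
\end{itemize}
Letting $z\to\tilde x$ gives intervals accumulating at $\max F$, whether or not $\tilde x$ is periodic.

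\textbf{Density of (iii).} This does not follow from a bump alone. A bump centred at the maximum does not change $DF$ there, and a tilt moves the maximum. The paper instead chooses the tilt outside the null set $\theta^s((W^s_{loc}(\tilde x)\cap\Lambda)\times\mathbb{R})$, so the condition holds at every possible new maximum.
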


\begin{remark}
Theorem \ref{principal1} will be a consequence of propositions \ref{rphi1} and \ref{rphi2} that apply to more general skew products. 
\end{remark}

\section{Proof of the theorems}
Before proceeding with the proof of the theorems, let us see first if it is possible a reduction of the spectra when $M=S\times \mathbb{S}^1$ to the case when $M=S$. It will also shows some of the techniques that will be used in the proof of Theorem \ref{principal1}.

Given $r\geq 2$ and $F\in C^r(S\times \mathbb{S}^1,\mathbb{R})$ we can consider the function $f_F:S\rightarrow \mathbb{R}$ given by $f_F(x)=\max_{t\in \mathbb{S}^1}F(x,t)$. Note that $f_F$ is Lipschitz continuous: since $F$ is differentiable, there exists some constant $c>0$ such that given $x_1,x_2\in S$ and $t\in \mathbb{S}^1$ one has $\abs{F(x_1,t)-F(x_2,t)}\leq c\cdot d(x_1,x_2)$, thus $F(x_1,t)-F(x_2,t)\leq c\cdot d(x_1,x_2)$. From this we get $F(x_1,t)\leq c\cdot d(x_1,x_2)+ f_F(x_2)$ and then $f_F(x_1)\leq c\cdot d(x_1,x_2)+ f_F(x_2)$. By symmetry, the other inequality also holds and then $\abs{f_F(x_1)-f_F(x_2)}\leq c\cdot d(x_1,x_2)$.

With some additional conditions one can say a little bit more:

\begin{proposition}\label{differentiable}
Let $F\in C^r(S\times \mathbb{S}^1,\mathbb{R})$ and suppose that given $x\in S$ there exists a unique $t_x\in \mathbb{S}^1$ such that $f_F(x)=F(x,t_x)$ and $\frac{\partial^2F}{\partial t^2}(x,t_x)<0$. Then, $f_F\in C^{r-1}(S,\mathbb{R})$.
\end{proposition}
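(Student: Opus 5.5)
The plan is to show that the maximizer map $x\mapsto t_x$ is $C^{r-1}$ by the implicit function theorem, and then write $f_F(x)=F(x,t_x)$ as a composition of $C^{r-1}$ maps.

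Fix $x_0\in S$ and work in a chart around $x_0$ and a local angular coordinate around $t_{x_0}$. Consider $G(x,t)=\frac{\partial F}{\partial t}(x,t)$, which is of class $C^{r-1}$ with $r-1\ge 1$. Since $t_{x_0}$ is an interior maximum of $F(x_0,\cdot)$ on the circle, we have $G(x_0,t_{x_0})=0$. By hypothesis, $\frac{\partial G}{\partial t}(x_0,t_{x_0})=\frac{\partial^2F}{\partial t^2}(x_0,t_{x_0})<0$. The implicit function theorem then gives neighborhoods $U\ni x_0$ and $J\ni t_{x_0}$ and a $C^{r-1}$ map $\tau:U\to J$ such that, for $(x,t)\in U\times J$, we have $G(x,t)=0$ if and only if $t=\tau(x)$.

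The main step is to identify $\tau(x)$ with the global maximizer $t_x$ for $x$ near $x_0$. This requires continuity of $x\mapsto t_x$, which I would prove by compactness and uniqueness. Take $x_n\to x_0$ and pass to any convergent subsequence $t_{x_n}\to s$. Continuity of $F$ and of $f_F$ (it is Lipschitz, as shown above) gives
$$F(x_0,s)=\lim F(x_n,t_{x_n})=\lim f_F(x_n)=f_F(x_0).$$
Uniqueness of the maximizer then forces $s=t_{x_0}$. Hence $t_x\in J$ for all $x$ in a smaller neighborhood $U'\subset U$. Since $t_x$ is a critical point of $F(x,\cdot)$, we get $G(x,t_x)=0$, and therefore $t_x=\tau(x)$ on $U'$.

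Consequently $f_F(x)=F(x,\tau(x))$ on $U'$ is a composition of the $C^r$ map $F$ with the $C^{r-1}$ map $x\mapsto(x,\tau(x))$, so $f_F$ is $C^{r-1}$ near $x_0$. Since $x_0$ was arbitrary, $f_F\in C^{r-1}(S,\mathbb{R})$.

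One could also note, via the envelope identity, that $Df_F(x)=D_xF(x,\tau(x))$, because the $t$-derivative term vanishes. This shows $Df_F$ is $C^{r-1}$ as well, but this improvement is not needed. The only delicate point is the continuity of $t_x$, which is exactly where uniqueness of the maximizer enters; the nondegeneracy hypothesis $\frac{\partial^2F}{\partial t^2}(x,t_x)<0$ is used only to apply the implicit function theorem.
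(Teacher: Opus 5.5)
Your proof is correct and follows essentially the same route as the paper: apply the implicit function theorem to $\partial F/\partial t$, and show that the global maximizer $t_y$ stays in the implicit-function neighborhood. The paper localizes $t_y$ via an explicit gap $\epsilon_1$ and identifies it with the implicit branch using Rolle's theorem and $\partial^2F/\partial t^2<0$, whereas you use compactness plus uniqueness of the maximizer and the uniqueness clause of the implicit function theorem. Your closing envelope-identity remark is also correct and actually upgrades the conclusion to $f_F\in C^{r}$.
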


\begin{proof}
Given $x\in S$ consider $\epsilon_0>0$ such that for $(y,s)\in B(x,\epsilon_0)\times (t_x-\epsilon_0,t_x+\epsilon_0)$ one has $\frac{\partial^2F}{\partial t^2}(y,s)<0$. Let $\epsilon_1=F(x,t_x)-\max_{t\in \mathbb{S}^1\setminus (t_x-\epsilon_0,t_x+\epsilon_0)}F(x,t)>0$ and consider $0<\epsilon_2<\epsilon_0$ such that for $y\in B(x,\epsilon_2)$ and $s\in \mathbb{S}^1$ one has $\abs{F(y,s)-F(x,s)}<\epsilon_1/2$. We then conclude for $y\in B(x,\epsilon_2)$, $t_y\in (t_x-\epsilon_0,t_x+\epsilon_0)$. Indeed, for $s\in \mathbb{S}^1\setminus (t_x-\epsilon_0,t_x+\epsilon_0)$
$$F(y,s)<F(x,s)+\epsilon_1/2\leq F(x,t_x)-\epsilon_1+\epsilon_1/2=F(x,t_x)-\epsilon_1/2<F(y,t_x).$$ 
In addition, for $y\in B(x,\epsilon_2)$ there is only one $s\in (t_x-\epsilon_0,t_x+\epsilon_0)$ ($s=t_y$) such that $\frac{\partial F}{\partial t}(y,t)=0$, since for $(y,s)\in B(x,\epsilon_2)\times (t_x-\epsilon_0,t_x+\epsilon_0)$, $\frac{\partial^2F}{\partial t^2}(y,s)<0$ (here we used Rolle's theorem). The implicit function theorem applied to $G(x,t)=\frac{\partial F}{\partial t}(x,t)$ let us conclude in some neighborhood of $x$ that the function $y\rightarrow t_y$ is of class $C^{r-1}$ and then $f_F(\cdot)=F(\cdot,t_{\cdot})$ is also of class $C^{r-1}$.    
\end{proof}

Given a horseshoe $\Lambda\subset S$ of a diffeomorphism $\varphi:S\rightarrow S$ and an irrational number $\alpha$ we may ask if some of the following equalities  $\mathcal{L}_{\Lambda\times \mathbb{S}^1,\varphi\times {\text{rot}}_{\alpha},F}=\mathcal{L}_{\Lambda,\varphi,f_F}$ and $   \mathcal{M}_{\Lambda\times \mathbb{S}^1,\varphi\times {\text{rot}}_{\alpha},F}=\mathcal{M}_{\Lambda,\varphi,f_F}$ are true. To see that in general this is not the case, let us introduce some sets. First, denote by $\mathcal{R}$ to the set of non-constant functions of class $C^r$ defined in $\mathbb{S}^1$, which is clearly open and dense.

Given $x\in \Lambda$ we will write the symbolic representation of $x$ in the way $\Pi(x)=(\dots,a_{-2},a_{-1};a_0,a_1, a_2\dots)$, where the letter $a_0$ following to $;$ is in the $0$ position of the sequence. Additionally, given an admissible finite sequence $A=(a_{-s},...,a_s)\in \mathcal{A}^{2s+1}$ (i.e., $(a_i,a_{i+1})\in \mathcal{B}$ for all $-s\le i<s$), define $R_A=\Pi^{-1}\{(x_{n})\in \Sigma_{\mathcal{B}}:(x_{-s},\dots,x_0,\dots,x_s)=A\}$.

\begin{lemma}
Given a horseshoe $\Lambda\subset S$ with $HD(\Lambda)<1/2$, there exists a residual subset $\mathcal{R}_{\Lambda}\subset C^r(S,\mathbb{R})$ such that for any $f\in \mathcal{R}_{\Lambda}$, $f|_{\Lambda}$ is injective. 
\end{lemma}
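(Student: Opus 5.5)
We write the set of functions injective on $\Lambda$ as a countable intersection of open dense sets. Put $\Delta=\{(x,y)\in\Lambda\times\Lambda: x=y\}$. For $n\ge1$ let $K_n=\{(x,y)\in\Lambda\times\Lambda: d(x,y)\ge 1/n\}$, which is compact, and set
$$\mathcal{U}_n=\{f\in C^r(S,\mathbb{R}): f(x)\neq f(y)\ \text{for all }(x,y)\in K_n\}.$$
Then $f|_\Lambda$ is injective exactly when $f\in\bigcap_n\mathcal{U}_n$, so we can take $\mathcal{R}_\Lambda=\bigcap_n\mathcal{U}_n$ once each $\mathcal{U}_n$ is shown to be open and dense. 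Openness is immediate. The function $(x,y)\mapsto|f(x)-f(y)|$ is continuous on the compact set $K_n$, so for $f\in\mathcal{U}_n$ it has a positive minimum $\delta$. Any $g$ with $\|g-f\|_{C^0}<\delta/2$ then also lies in $\mathcal{U}_n$.

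Density is the real content, and we get it by a parameter (transversality) argument that uses $HD(\Lambda)<1/2$. Cover the compact set $K_n$ by finitely many pairs $U_i\times V_i$ of small open sets with $\overline{U_i}\cap\overline{V_i}=\emptyset$. For each $i$ choose a smooth bump $\psi_i$ equal to $1$ on $U_i$ and vanishing on $V_i$. Given $f\in C^r$, consider the family $f_\lambda=f+\sum_i\lambda_i\psi_i$ with $\lambda$ in a small ball of $\mathbb{R}^N$. It suffices to show that for Lebesgue-almost every $\lambda$ we have $f_\lambda(x)\neq f_\lambda(y)$ for all $(x,y)\in K_n$, since then $f_\lambda\in\mathcal{U}_n$ for arbitrarily small $\lambda$.

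We check this one $i$ at a time, working on $K_n\cap(\overline{U_i}\times\overline{V_i})$. Fix all the parameters except $\lambda_i$. For $(x,y)$ in this piece, $f_\lambda(x)-f_\lambda(y)=\lambda_i+h(x,y)$, where $h$ does not depend on $\lambda_i$; this holds because $\psi_i(x)=1$ and $\psi_i(y)=0$. So the bad values are $\lambda_i\in -h(K_n\cap(\overline{U_i}\times\overline{V_i}))$. Now $h(x,y)=g_1(x)-g_2(y)$ with $g_1,g_2$ of class $C^r\subset$ Lipschitz, so $h$ is Lipschitz on $S\times S$. Hence
$$HD\big(h(\Lambda\times\Lambda)\big)\le HD(\Lambda\times\Lambda).$$

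The main obstacle is to bound $HD(\Lambda\times\Lambda)$ by $2HD(\Lambda)<1$, because the Hausdorff dimension of a product is not in general at most the sum of the dimensions. For horseshoes on surfaces this is fine: $HD(\Lambda)$ equals its upper box dimension, since $\Lambda$ is locally the product of two dynamically defined Cantor sets whose Hausdorff and box dimensions coincide (Palis--Takens, Ch.~4). Therefore
$$HD(\Lambda\times\Lambda)\le \overline{\dim}_B(\Lambda\times\Lambda)\le 2\,\overline{\dim}_B\Lambda=2HD(\Lambda)<1.$$
It follows that the bad set of $\lambda_i$ has Lebesgue measure zero. By Fubini the bad set of $\lambda$ has measure zero in $\mathbb{R}^N$, for each $i$ and so for the finite union over $i$. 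This gives density of $\mathcal{U}_n$, and Baire's theorem makes $\mathcal{R}_\Lambda$ residual.
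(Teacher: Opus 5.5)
Your proof is correct, and its core is the same as the paper's. You add a small constant to $f$ near one piece of $\Lambda$, so the bad values of that constant lie in a difference set $h(A\times B)$ of Hausdorff dimension at most $2\,\overline{\dim}_B\Lambda=2HD(\Lambda)<1$, which is therefore Lebesgue-null.

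The decomposition differs. The paper indexes its open sets by the Markov cylinders $R_{X_i}$ of words of length $2n+1$. It requires the stronger condition $f(R_{X_i})\cap f(R_{X_j})=\emptyset$ for $i\neq j$. Since the cylinders are disjoint compact sets, adding $t_i$ on a neighborhood of $R_{X_i}$ just translates $f(R_{X_i})$. The $t_i$ can then be chosen one after another, each avoiding finitely many null sets, and injectivity in the limit comes from the symbolic coding: distinct points differ in some central word.

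You instead use the metric sets $K_n$ and a cover by pairs of neighborhoods with disjoint closures. Because your bumps overlap, you need the simultaneous family $f_\lambda$ and a Fubini argument. That step is legitimate: each bad set of $\lambda$ is closed, being the projection of a closed set along a compact factor, so it is measurable.

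In exchange, your argument uses neither the Markov partition nor the coding. It applies verbatim to any compact $\Lambda\subset S$ with $\overline{\dim}_B\Lambda<1/2$; the only dynamical input is $HD(\Lambda)=\overline{\dim}_B\Lambda$, which the paper also uses. The paper's version avoids the Fubini step and yields the stronger open condition on images of cylinders.
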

\begin{proof}
Given $n\in \mathbb{N}$, let $\mathcal{R}^n_{\Lambda}\subset C^r(S,\mathbb{R})$ be the set of functions $f$ such that if $X_1,\dots, X_s$ are all the admissible words of size $2n+1$, then one has $f(R_{X_i})\cap f(R_{X_j})= \emptyset$, for $i\neq j$. This set is clearly open because the rectangles $\{R_{X_i}\}_{i=1}^s$ are compact and disjoint. It is also dense because given a function $f\in C^r(S,\mathbb{R})$ if $h:\mathbb{R}^2\rightarrow \mathbb{R}$ is given by $h(x,y)=x-y$, $B$ denotes the usual \textit{box counting dimension} and $c\in\mathbb{R}$, then for the set $R_{X_i,X_j,c}=h((f(R_{X_i})+c)\times f(R_{X_j }))$ one has
\begin{eqnarray*}
HD(R_{X_i,X_j,c})&\leq& B(R_{X_i,X_j,c})\leq B((f(R_{X_i})+c)\times f(R_{X_j}))\\ &\leq& B(f(R_{X_i})+c)+B(f(R_{X_j}))\leq 2B(f(\Lambda))\\ &\leq &2B(\Lambda)=2HD(\Lambda)<1  \end{eqnarray*}
and then 
$$\{ t\in \mathbb{R}:(f(R_{X_i})+c)\cap(f(R_{X_j})+t)= \emptyset \}=\mathbb{R}\setminus R_{X_i,X_j,c}$$
has Lebesgue measure $1$. Thus, we can inductively add to $f$ a small constant in the neighborhood of each of the sets $R_{X_i}$: we start with $t_1=0$ and once we have chosen $t_1,\dots,t_k$ with $k<s$, we take a small  $t_{k+1}\in \bigcap_{i=1}^k (\mathbb{R}\setminus R_{X_i,X_{k+1},t_i})$. Finally, define the set $\mathcal{R}_{\Lambda}=\bigcap_{n\in\mathbb{N}}\mathcal{R}^n_{\Lambda}$, this set works because given two different points $x_1,x_2\in \Lambda$, there exists some $n_0\in \mathbb{N}$ such that the words of size $2n_0+1$ in the center of their kneading sequences are different. This finishes the proof of the lemma.

\end{proof}

Given $f\in \mathcal{R}_{\Lambda}$ denote by $x_f$ to the only one element in $\Lambda$ such that $f(x_f)=\max f|_{\Lambda}$.

\begin{theorem}
Fix a horseshoe $\Lambda\subset S$ with $HD(\Lambda)<1/2$. Given $(f,g)\in \mathcal{R}_{\Lambda}\times \mathcal{R}$ there exists a constant $\alpha_{f,g}>0$ such that the function $F_{f+g}:S\times \mathbb{S}^1\rightarrow \mathbb{R}$ given by $F_{f+g}(x,t)=f(x)+\alpha_{f,g}\cdot (g(t)-\min g)$ satisfies for $\alpha\in \mathbb{R}\setminus \mathbb{Q}$ that 
$$\mathcal{L}_{\Lambda\times \mathbb{S}^1,\varphi\times {\text{rot}}_{\alpha},F_{f+g}}\neq\mathcal{L}_{\Lambda,\varphi,f_{F_{f+g}}} \ \ \text{and} \ \ \mathcal{M}_{\Lambda\times \mathbb{S}^1,\varphi\times {\text{rot}}_{\alpha},F_{f+g}}\neq\mathcal{M}_{\Lambda,\varphi,f_{F_{f+g}}}.$$
\end{theorem}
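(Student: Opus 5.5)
The plan is to compute $f_{F}$ explicitly, show that its two spectra are Lebesgue-null, and exhibit a non-trivial interval inside each of the skew-product spectra. Write $D=\max g-\min g>0$ (recall $g\in\mathcal{R}$ is non-constant), $\tilde g=g-\min g\in[0,D]$, and $c=\alpha_{f,g}$, to be chosen. Then $F_{f+g}(x,t)=f(x)+c\tilde g(t)$, so $f_{F_{f+g}}=f+cD$. Hence $\mathcal{L}_{\Lambda,\varphi,f_{F_{f+g}}}=\mathcal{L}_{\Lambda,\varphi,f}+cD$ and $\mathcal{M}_{\Lambda,\varphi,f_{F_{f+g}}}=\mathcal{M}_{\Lambda,\varphi,f}+cD$.

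Since $\Lambda$ is compact and $f$ continuous, a supremum or limsup of $f$ along an orbit in $\Lambda$ lies in $\overline{f(\Lambda)}=f(\Lambda)$. So both spectra of $f_{F_{f+g}}$ lie in $f(\Lambda)+cD$. By the box-dimension estimate already used in the Lemma, $HD(f(\Lambda))\le B(\Lambda)=HD(\Lambda)<1/2$. Thus both spectra have zero Lebesgue measure and contain no interval. It therefore suffices to show that $\mathcal{M}_{\Lambda\times\mathbb{S}^1,\varphi\times\text{rot}_\alpha,F_{f+g}}$ and $\mathcal{L}_{\Lambda\times\mathbb{S}^1,\varphi\times\text{rot}_\alpha,F_{f+g}}$ each contain a non-trivial interval, for a suitable $c$ depending only on $(f,g)$.

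For the Markov spectrum I use the orbit of $x_f$, the unique maximum point of $f|_\Lambda$. Fix a point $y_0\in\Lambda$ whose orbit stays boundedly away from $x_f$, for instance a periodic point $p\neq x_f$ such that $x_f$ is not in its orbit. More generally, I look for $y_0$ with $\sup_n f(\varphi^n y_0)\le f(x_f)-\eta$ for some $\eta>0$. Then I take $c<\eta/D$. If $x_f$ itself is not accumulated by its own orbit (e.g.\ $x_f$ periodic, using injectivity of $f$ on $\Lambda$), then for every $t$
$$m(x_f,t)=\max\Big(f(x_f)+c\tilde g(t),\ \sup_{n\neq 0}\big(f(\varphi^n x_f)+c\tilde g(t+n\alpha)\big)\Big)=f(x_f)+c\tilde g(t),$$
because the second term is at most $f(x_f)-\eta+cD<f(x_f)$. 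Letting $t$ range over $\mathbb{S}^1$ gives $[f(x_f),f(x_f)+cD]\subset\mathcal{M}_{\Lambda\times\mathbb{S}^1,\varphi\times\text{rot}_\alpha,F_{f+g}}$.

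If $x_f$ is recurrent, I instead replace $x_f$ by an orbit built symbolically. Its sequence agrees with $\Pi(x_f)$ on a window $[-m,m]$ around time $0$ and with the periodic filler $\Pi(p)$ elsewhere. Choosing $m$ and then $c$ small, the sup is attained at time $0$ up to an error that is uniformly controlled, and continuity in $t$ gives an interval of the form $[f(x_f)+\epsilon,\ f(x_f)+cD-\epsilon]$.

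For the Lagrange spectrum I mimic this with infinitely many returns. Fix $s\in[0,D]$ and a point $t_s$ with $\tilde g(t_s)=s$. By minimality of $\text{rot}_\alpha$ ($\alpha$ irrational), pick times $n_1<n_2<\cdots$ with gaps tending to infinity and $n_k\alpha\to t_s-t_0$ in $\mathbb{S}^1$. Let $x\in\Lambda$ have symbolic sequence equal to windows of $\Pi(x_f)$ of length $2m_k+1$, $m_k\to\infty$, centred at the $n_k$, and to the filler $\Pi(p)$ in between; admissibility is obtained by inserting fixed connecting words using transitivity of $\Sigma_{\mathcal{B}}$. Then $F(\varphi^{n_k}x,t_0+n_k\alpha)\to f(x_f)+cs$. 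At filler times the values are at most $f(x_f)-\eta+cD<f(x_f)$.

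The main obstacle is the times $n_k+j$ with $0<|j|\le m_k$. There the value is close to $f(\varphi^j x_f)+c\tilde g(\cdot)$, which could exceed $f(x_f)+cs$ if $x_f$ is recurrent or has $f(\varphi^j x_f)$ close to $f(x_f)$. I would first handle this by restricting $s$ to a subinterval $[s_0,D]$ and choosing $c$ (depending on $f,g$) below the gap $f(x_f)-\sup\{f(\varphi^j x_f):1\le|j|\le m\}$ for a fixed finite $m$. I would then use only windows of that fixed length $m$ and approximate $x_f$ to within $\epsilon$, which yields an interval of Lagrange values of length about $c(D-s_0)-O(\epsilon)$.

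If this gap is zero because $x_f$ is recurrent, I would replace $x_f$ by the maximum point of $f$ on the orbit of a nearby periodic point $q$. That point is isolated from the rest of its orbit, so the gap is positive, and the same construction produces an interval. This interval lies in $\mathcal{L}_{\Lambda\times\mathbb{S}^1,\varphi\times\text{rot}_\alpha,F_{f+g}}$ but not in the null set $\mathcal{L}_{\Lambda,\varphi,f}+cD$.
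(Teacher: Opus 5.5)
Your outer reduction is correct and differs from the paper's. Since $f_{F_{f+g}}=f+cD$, both spectra of $f_{F_{f+g}}$ lie in the Lebesgue-null set $f(\Lambda)+cD$, which needs only $HD(\Lambda)<1$. So it suffices to put a non-trivial interval in each skew-product spectrum. The paper instead chooses $\beta=cD$ generic, so that $(\mathcal{M}_{\Lambda,\varphi,f}+\beta)\cap\mathcal{M}_{\Lambda,\varphi,f}=\emptyset$, and exhibits a single value.

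The gap is in producing those intervals. You try to make the supremum (or limsup) occur at the passage near $x_f$, controlling all other times by how much $f$ drops along the orbit of $x_f$. This fails as written, in three places.

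(i) The example ``$x_f$ periodic'' is wrong. If $\varphi^kx_f=x_f$, the times $n\in k\mathbb{Z}$ contribute $f(x_f)+c\tilde g(t+n\alpha)$, so $m(x_f,t)=f(x_f)+cD$ for every $t$. Also, your bound $f(x_f)-\eta+cD$ for $n\neq0$ uses $\eta$ from the unrelated point $y_0$, which says nothing about $\sup_{n\neq0}f(\varphi^nx_f)$.

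(ii) In the recurrent case, let $y$ be the glued point. For time $0$ to carry the supremum for every $t$, $c$ must be small compared with $f(y)-f(\varphi^jy)$ for all $j\neq0$. At near-returns of $y$ to $x_f$, these differences are bounded by the approximation error plus $f(x_f)-f(\varphi^jx_f)$, which vanishes when $x_f$ is periodic, and they may even be negative. So the interval $[f(x_f)+\epsilon,f(x_f)+cD-\epsilon]$, which needs $cD>2\epsilon$, is not obtained.

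(iii) In the Lagrange part, the window gap $f(x_f)-\max_{1\le|j|\le m}f(\varphi^jx_f)$ says nothing about times near the ends of the window or inside the connecting words, where the orbit is not close to $\varphi^jx_f$. For your periodic replacement $q$, this gap is $0$ once $m$ exceeds the period. And the truncation error $O(\epsilon)$ is not tied to the gap, so $c(D-s_0)-O(\epsilon)$ need not be positive.

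The missing idea is the paper's: do not compare with $x_f$, but use the exact maximum along a homoclinic orbit. Take a periodic point $p$ with $x_f\notin\mathcal{O}(p)$, and $z\in W^s(\mathcal{O}(p))\cap W^u(\mathcal{O}(p))\cap\Lambda$ with $f(z)>\max f|_{\mathcal{O}(p)}$. As $|n|\to\infty$, $\varphi^nz$ accumulates only on $\mathcal{O}(p)$, and $f|_\Lambda$ is injective. Hence $f$ along the orbit of $z$ has a strict maximum at a single time $i_0$, with gap $\gamma=f(\varphi^{i_0}z)-\sup_{n\neq i_0}f(\varphi^nz)>0$.

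Choose $c<\gamma/D$; this is independent of $\alpha$. Then $m(\varphi^{i_0}z,t)=f(\varphi^{i_0}z)+c\tilde g(t)$ exactly. For the Lagrange spectrum, use the paper's points $\Pi^{-1}(R^\infty;HR^{n_1}HR^{n_2}\dots)$ with $n_s\to\infty$ chosen via density of $\{n|R|\alpha\}$. Arrange that the angle at the $i_0$-slot of the $s$-th block $H$ tends to a prescribed $\tau$; this also settles the length-compatibility constraint you skip by fixing the $n_k$ before gluing. These points have Lagrange value $f(\varphi^{i_0}z)+c\tilde g(\tau)$.

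So both skew-product spectra contain $[f(\varphi^{i_0}z),f(\varphi^{i_0}z)+cD]$. Your null-set observation then concludes the proof, even under the weaker hypothesis $HD(\Lambda)<1$.
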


\begin{proof}
As in the proof of the previous lemma
\begin{eqnarray*}
HD(h(\mathcal{M}_{\varphi,f}\times\mathcal{M}_{\varphi,f}))&\leq& B(h(\mathcal{M}_{\varphi,f}\times\mathcal{M}_{\varphi,f}))\leq B(\mathcal{M}_{\varphi,f}\times\mathcal{M}_{\varphi,f})\leq 2B(\mathcal{M}_{\varphi,f})\\ &\leq& 2B(f(\Lambda))\leq 2B(\Lambda)=2HD(\Lambda)<1.    
\end{eqnarray*}
Then Lebesgue almost every point $\beta\in \mathbb{R}$ satisfies $(\mathcal{M}_{\varphi,f}+\beta)\cap \mathcal{M}_{\varphi,f}=\emptyset$.

Consider a periodic point $\Pi^{-1}(\bar{R})$ where $R$ is admissible, $\bar{R}=\dots R;R\dots$ is the infinite periodic sequence with period $R$, and $x_f\notin \mathcal{O}(\Pi^{-1}(\bar{R}))$. Additionally, take $z\in W^s(\Pi^{-1}(\bar{R}))\cap W^u(\Pi^{-1}(\bar{R}))$ close enough to $x_f$ ($f(z)>\max_{n\in \mathbb{Z}}f(\varphi^n(\Pi^{-1}(\bar{R})))$). Note that $z$ is not periodic because, in other case, the value $f(z)$ would be obtained close to the orbit of $\Pi^{-1}(\bar{R})$. As $f|_{\Lambda}$ is injective, we can find $i_0\in \mathbb{Z}$ such that $f(\varphi^{i_0}(z))>\sup_{i\neq i_0}f(\varphi^i(z))$. Then if we choose $\beta\in (0,f(\varphi^{i_0}(z))-\sup_{i\neq i_0}f(\varphi^i(z)))$, we can set $\alpha_{f,g}=\frac{\beta}{\max g-\min g}$. Thus, if $\bar{t}\in \mathbb{S}^1$ is such that $g(\bar{t})=\min g$, we have:
$$F_{f+g}(\varphi^{i_0}(z),\bar{t})=f(\varphi^{i_0}(z))+\alpha_{f,g}\cdot (g(\bar{t})-\min g)=f(\varphi^{i_0}(z))$$
and for $n\in \mathbb{Z}\setminus\{ 0\}$
\begin{eqnarray*}
F_{f+g}((\varphi\times {\text{rot}}_{\alpha})^n(\varphi^{i_0}(z),\bar{t}))&=&F_{f+g}(\varphi^{n+i_0}(z),{\text{rot}}_{\alpha}^n(\bar{t}))\\&=&f(\varphi^{n+i_0}(z))+\alpha_{f,g}\cdot (g({\text{rot}}_{\alpha}^n(\bar{t}))-\min g)\\ &\leq&  f(\varphi^{n+i_0}(z))+\alpha_{f,g}\cdot (\max g-\min g)=f(\varphi^{n+i_0}(z))+\beta\\ &<& f(\varphi^{n+i_0}(z))+f(\varphi^{i_0}(z))-\sup_{i\neq i_0}f(\varphi^i(z)) \leq f(\varphi^{i_0}(z)).
\end{eqnarray*}
From this we conclude that
$$m_{\varphi\times {\text{rot}}_{\alpha},F_{f+g}}(\varphi^{i_0}(z),\bar{t})=f(\varphi^{i_0}(z))=m_{\varphi,f}(z).$$
Additionally, given $x\in S$ one has
$$f_{F_{f+g}}(x)=\max _{t\in \mathbb{S}^1}F_{f+g}(x,t)=f(x)+\alpha_{f,g}\cdot (\max g-\min g)=f(x)+\beta,$$
which let us conclude that $\mathcal{M}_{\Lambda,\varphi,f_{F_{f+g}}}=\mathcal{M}_{\Lambda,\varphi,f}+\beta$. Therefore 
$$m_{\varphi\times {\text{rot}}_{\alpha},F_{f+g}}(\varphi^{i_0}(z),\bar{t})=m_{\varphi,f}(z)\notin \mathcal{M}_{\Lambda,\varphi,f}+\beta=\mathcal{M}_{\Lambda,\varphi,f_{F_{f+g}}},$$
that prove $\mathcal{M}_{\Lambda\times \mathbb{S}^1,\varphi\times {\text{rot}}_{\alpha},F_{f+g}}\neq\mathcal{M}_{\Lambda,\varphi,f_{F_{f+g}}}$.

Moreover, we have $m_{\varphi,f}(z)  \in\mathcal{L}_{\Lambda\times \mathbb{S}^1,\varphi\times {\text{rot}}_{\alpha},F_{f+g}}$. First observe that given $n,r\in \mathbb{N}$ and $s\in \mathbb{S}^1$, since $\alpha\in \mathbb{R}\setminus \mathbb{Q}$, the set $\{{\text{rot}}_{\alpha}^{k\cdot n+r}(s) \}_{k\in\mathbb{N}}=\{{\text{rot}}_{n\cdot \alpha}^k({\text{rot}}_{\alpha}^r(s) ) \}_{k\in\mathbb{N}}$ is dense in $\mathbb{S}^1$. Thus, if $j> 0$ is such that $\Pi(\varphi^{-j}(z))=R^{\infinity};HR^{\infinity}$, where $RHR$ is admissible, we can define the point 
$$\tilde{z}=\Pi^{-1}(R^{\infinity};HR^{n_1}HR^{n_2}H\dots R^{n_{k-1}}HR^{n_k}\dots ),$$ 
where the sequence $\{n_s\}_{s\in\mathbb{N}}$ is taken in such way that:
\begin{itemize}
    \item $n_s\rightarrow\infinity$
    \item ${\text{rot}}_{\alpha}^{(\Sigma_{i=1}^{s}n_i)\cdot \abs{R}+s\cdot \abs{H}}(\bar{t})\rightarrow {\text{rot}}_{\alpha}^{-j-i_0}(\bar{t}).$
\end{itemize}    
From this it follows that
$$\limsup_{n\rightarrow\infinity}F_{f+g}((\varphi\times {\text{rot}}_{\alpha}))^n(\tilde{z},\bar{t}))= \limsup_{n\rightarrow\infinity} F_{f+g}(\varphi^n(\tilde{z}), {\text{rot}}^n_{\alpha}(\bar{t}))=m_{\varphi,f}(z)$$
since for $n_s$ big, in the positions different than the position $i_0+j$ of $H$ (corresponding to $\varphi^{i_0}(z)$), the value of $F_{f+g}$ is smaller than $f(\varphi^{i_0}(z))$ because when we estimated for $n\in \mathbb{Z}\setminus\{ 0\}$, $F_{f+g}((\varphi\times {\text{rot}}_{\alpha})^n(\varphi^{i_0}(z),\bar{t}))< f(\varphi^{i_0}(z))$ we did not use the time $t=\bar{t}$. And we also have that 
\begin{eqnarray*}
&&\lim _{s\rightarrow\infinity}F_{f+g}((\varphi\times {\text{rot}}_{\alpha})^{(\Sigma_{i=1}^{s}n_i)\cdot \abs{R}+s\cdot \abs{H}+j+i_0}(\tilde{z},\bar{t}))\\&=&\lim _{s\rightarrow\infinity}F_{f+g}(\varphi^{(\Sigma_{i=1}^{s}n_i)\cdot \abs{R}+s\cdot \abs{H}+j+i_0}(\tilde{z}),{\text{rot}}_{\alpha}^{(\Sigma_{i=1}^{s}n_i)\cdot \abs{R}+s\cdot \abs{H}+j+i_0}(\bar{t}))\\&=&F_{f+g}(\varphi^{i_0}(z),\bar{t})= f(\varphi^{i_0}(z))=m_{\varphi,f}(z).
\end{eqnarray*}
This proves the affirmation and then  
$$m_{\varphi,f}(z)\in \mathcal{L}_{\Lambda\times \mathbb{S}^1,\varphi\times {\text{rot}}_{\alpha},F_{f+g}}\setminus \mathcal{M}_{\Lambda,\varphi,f_{F_{f+g}}}\subset\mathcal{L}_{\Lambda\times \mathbb{S}^1,\varphi\times {\text{rot}}_{\alpha},F_{f+g}}\setminus \mathcal{L}_{\Lambda,\varphi,f_{F_{f+g}}}.$$
As we wanted to see.
\end{proof}

\begin{remark}
If $HD(\Lambda)<1$, we will prove in Corollary \ref{where} that for generic $F\in C^r(S\times \mathbb{S}^1)$ it is also true that 
    $$\mathcal{L}_{\Lambda\times \mathbb{S}^1,\varphi\times {\text{rot}}_{\alpha},F}\neq\mathcal{L}_{\Lambda,\varphi,f_F} \ \ \text{and} \ \ \mathcal{M}_{\Lambda\times \mathbb{S}^1,\varphi\times {\text{rot}}_{\alpha},F}\neq\mathcal{M}_{\Lambda,\varphi,f_F}.$$   
\end{remark}

\subsection{The open and dense set of functions}

Fix $r\geq 2$. Given $F\in C^r(S\times \mathbb{S}^1,\mathbb{R})$, define the set 
$$M_{F}(\Lambda\times \mathbb{S}^1)=\{z\in\Lambda\times \mathbb{S}^1:F(z)\geq F(y) \ \ \forall \ y\in \Lambda\times \mathbb{S}^1\}.$$
This is the set of maximum points of $F|_{\Lambda\times \mathbb{S}^1}$. For $x\in \Lambda$, let $e^s_x$ and $e^u_x$ be unit vectors in the stable and unstable directions of $T_xS$.

\begin{proposition}
The set $\mathcal{R}_{\varphi,\Lambda}$ of the $F\in C^r(S\times \mathbb{S}^1,\mathbb{R})$ such that $\#M_{F}(\Lambda\times \mathbb{S}^1)=1$ and if $(\tilde{x},\tilde{t})\in \Lambda\times \mathbb{S}^1$ is the unique element of $M_{F}(\Lambda\times \mathbb{S}^1)$, then $DF_{(\tilde{x},\tilde{t})}(e^{s,u}_{\tilde{x}})\neq 0$ and $\frac{\partial^2F}{\partial t^2}(\tilde{x},\tilde{t})<0$, is open and dense.
\end{proposition}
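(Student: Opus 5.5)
We prove openness and density separately. Throughout, the key fact is that $\Lambda\times\mathbb{S}^1$ is compact and that the maps $x\mapsto e^{s}_x,e^u_x$ are continuous on $\Lambda$ (up to sign, which does not affect the non-vanishing conditions).

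\emph{Openness.} Fix $F\in\mathcal{R}_{\varphi,\Lambda}$ with unique maximum $(\tilde x,\tilde t)$.

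By continuity of $DF$, of $\partial^2F/\partial t^2$ and of $e^{s,u}$, there is a neighborhood $U=V\times J$ of $(\tilde x,\tilde t)$, with $V\cap\Lambda$ relatively open, and a constant $c>0$ such that on $(V\cap\Lambda)\times J$ we have
$$\abs{DF(e^{s}_x)}>c,\qquad \abs{DF(e^{u}_x)}>c,\qquad \frac{\partial^2F}{\partial t^2}<-c.$$

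Set $\delta=\max_{\Lambda\times\mathbb{S}^1}F-\max_{(\Lambda\times\mathbb{S}^1)\setminus U}F>0$. If $\|G-F\|_{C^r}$ is small, then $\|G-F\|_{C^0}<\delta/3$ and the three inequalities persist for $G$ with constant $c/2$. Consequently every maximum point of $G$ on $\Lambda\times\mathbb{S}^1$ lies in $U$, and at such a point the required non-degeneracy conditions hold.

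The remaining issue is uniqueness. Suppose $G$ had two maximum points $(x_1,t_1)\neq(x_2,t_2)$, both in $U$.
\begin{itemize}
\item If $x_1=x_2$, then strict concavity of $G(x_1,\cdot)$ on $J$ forbids two maxima.
\item If $x_1\neq x_2$, uniqueness needs a separate argument, which is the delicate point.
\end{itemize}
For the case $x_1\neq x_2$, consider $g(x)=\max_{t\in J}G(x,t)$. By the proof of Proposition~\ref{differentiable}, $g$ is $C^{r-1}$ near $\tilde x$, and $Dg(x)=D_xG(x,t_x)$. This is uniformly close to $D_xF(\tilde x,\tilde t)$, which is nonzero on both $e^s_{\tilde x}$ and $e^u_{\tilde x}$.

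Use local product structure: for $x_1,x_2\in\Lambda\cap V$ with $V$ small, the bracket point $w=[x_1,x_2]\in\Lambda$ lies on $W^s_{loc}(x_1)\cap W^u_{loc}(x_2)$. Along the stable arc from $x_1$ to $w$, and along the unstable arc from $x_2$ to $w$, $g$ is strictly monotone because $Dg(e^{s,u})\neq0$. Orient $e^s$ and $e^u$ so that $Dg>0$ on both. Then, since $x_1$ and $x_2$ are maxima of $g$ on $\Lambda\cap V$, one shows $x_1=w=x_2$. Indeed, in local coordinates $\Lambda\cap V$ is a product $K^s\times K^u$ of Cantor sets, and $g$ is nearly a linear function increasing in both coordinates. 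Its maximum over the product is therefore attained only at (the right endpoint of $K^s$, the right endpoint of $K^u$). Since these are extremal points of the Cantor sets, the maximizer is unique.

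\emph{Density.} Given $F$ and $\varepsilon>0$, we perturb in four steps.
\begin{enumerate}
\item Pick a maximum point $(\tilde x,\tilde t)$ of $F$ on $\Lambda\times\mathbb{S}^1$.
\item Add $-\eta\,\rho\, d((x,t),(\tilde x,\tilde t))^2$, where $\rho$ is a bump function and $\eta$ is small, in a chart. This makes $(\tilde x,\tilde t)$ the unique strict maximum, with $\partial^2F/\partial t^2(\tilde x,\tilde t)<0$.
\item Add a small linear term $\eta'\,\rho\,\ell(x)$, where $\ell$ is a linear form in a chart around $\tilde x$ with $\ell(e^s_{\tilde x})\neq0$ and $\ell(e^u_{\tilde x})\neq0$. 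This may move the maximum, but only to nearby points. This is the main obstacle: the new maximum $(x',t')$ must still satisfy the conditions, and at $x'$ we need $DF(e^{s,u}_{x'})\neq0$.
\item Since $\eta'\ell$ dominates the gradient of the quadratic term near $\tilde x$ once $\eta\ll\eta'$, the condition $DF(e^{s,u}_{x'})\neq0$ holds by continuity of $e^{s,u}$.
\end{enumerate}
Concavity in $t$ survives, because the term in step 3 is independent of $t$ and the term in step 2 provides $-2\eta$ in $\partial^2/\partial t^2$.

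Uniqueness of the maximum after step 3 then follows exactly as in the openness argument, via the product-structure monotonicity. All perturbations are $C^r$-small for small $\eta$ and $\eta'$.
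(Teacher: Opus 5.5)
Your openness argument is essentially the paper's (Lemma~\ref{open}). Fixed signs of the $e^s$- and $e^u$-derivatives on a small rectangle force every maximizer to the same corner, and strict concavity in $t$ fixes the fiber coordinate; working with $g(x)=\max_{t\in J}G(x,t)$ instead of freezing $\bar t$ changes nothing.

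The density part, however, has a genuine gap at step~4, exactly at the obstacle you flagged. At the new maximizer $(x',t')$, the quantity that must be nonzero is $D_xF_{(x',t')}(e^s_{x'})-\eta\,D(\rho d^2)(e^s_{x'})+\eta'\ell(e^s_{x'})$. You only compare $\eta'\ell$ with the middle term.

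The relevant case is $D_xF_{(\tilde x,\tilde t)}(e^s_{\tilde x})=0$, since otherwise no $\ell$ is needed. In that case the first term is controlled only by how far the maximum moves. The only bound on that displacement comes from $F(x',t')\le F(\tilde x,\tilde t)$: it gives $\eta\, d^2\lesssim\eta'\|\ell\|\, d$, i.e.\ $d\lesssim\eta'\|\ell\|/\eta$. So nothing prevents the first term from being of size $\|F\|_{C^2}\,\eta'/\eta\gg\eta'\abs{\ell(e^s)}$.

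Even the quadratic term's gradient at $x'$ is of the same order as $\eta'\ell$. In the model $c-\eta\abs{x-\tilde x}^2+\eta'\ell(x-\tilde x)$ the two cancel exactly at the free maximizer. Taking $\eta\ll\eta'$ only makes the displacement larger, pushing $x'$ out of the region where $\rho\equiv 1$ and $e^{s,u}_{x'}\approx e^{s,u}_{\tilde x}$.

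Since $\Lambda$ is a Cantor set, the maximizer over $\Lambda$ can be a point accumulated from both sides along its stable leaf, where the stable derivative really does vanish. A single $\ell$ chosen by looking only at $\tilde x$ cannot rule this out.

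There is also a smaller slip in step~2. A compactly supported term $-\eta\rho d^2$ leaves untouched any maxima of $F$ outside the support of $\rho$. You need a penalty that is positive away from $(\tilde x,\tilde t)$, or a nonnegative bump peaked there as in the paper.

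The missing idea is to make the non-degeneracy hold \emph{simultaneously on the whole set the maximum can move to}, not just at $\tilde x$. In Lemma~\ref{denso} the paper proceeds as follows.
\begin{enumerate}
\item It first removes critical points of $F$ from $\Lambda\times\mathbb{S}^1$ using a Morse approximation. Then at the maximum, where $\partial F/\partial t=0$, at least one directional derivative is nonzero, say along $e^u$.
\item This is an open condition, and it gives $\tilde x\in\partial_s\Lambda$. Hence the maximizers of all nearby functions lie in $W^s_{loc}(\tilde x)\cap\Lambda$, a set of zero length.
\item Write $f(x)=F(x,\tilde t(x))$ in a chart, so that $\nabla f(x)=\nabla_xF(x,\tilde t(x))$ is $C^1$. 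The map $(x,y)\mapsto\nabla f(x)-y\,(e^s_x)^{\perp}$ on $(W^s_{loc}(\tilde x)\cap\Lambda)\times\mathbb{R}$ is the restriction of a $C^1$ map, so its image is Lebesgue-null.
\item Subtracting $\langle v,x\rangle$, with $v$ small and outside that image, makes the $e^s$-derivative nonzero at \emph{every} point of $W^s_{loc}(\tilde x)\cap\Lambda$, wherever the maximum lands.
\end{enumerate}
Some transversality or measure-zero step of this kind is what your density argument needs.
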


Before proving this proposition we will recall some results. We say that $x$ is a boundary point of $\Lambda$ in the unstable direction, if $x$ is an accumulation point only from one side by points in $W^{u}_{loc}(x)\cap \Lambda$. If $x$ is a boundary point of $\Lambda$ in the unstable direction, then, due to the local product structure, the same holds for all points in $W^{s}(x)\cap \Lambda$. So the boundary points in the unstable direction are local intersections of local stable manifolds with $\Lambda$. For this reason we denote the set of boundary points in the unstable direction by $\partial_{s}\Lambda$. The boundary points in the stable direction are defined similarly. The set of these boundary points is denoted by $\partial_{u}\Lambda$.

It is well known that for any horseshoe $\Lambda$, as above, there are periodic points $p^{s}_{1},...,p^{s}_{n_{s}}$ and $p^{u}_{1},...,p^{u}_{n_{u}}$ with $n_s,n_u\in \mathbb{N}$  such that
$$\Lambda\cap \left(\bigcup_{i}W^{s}(p^{s}_{i})\right)=\partial_{s}\Lambda \ \ \text{and} \ \  \Lambda\cap \left(\bigcup_{i}W^{u}(p^{u}_{i})\right)=\partial_{u}\Lambda.$$
Moreover, $\partial_{s}\Lambda$, $\partial_{u}\Lambda$ and $\partial_{s}\Lambda \cap \partial_{u}\Lambda$ are dense in $\Lambda$. The set $\partial_{s}\Lambda \cap \partial_{u}\Lambda$ is the set of \textit{corners of rectangles of} $\Lambda$. 

\begin{lemma}
If $F\in C^r(S\times \mathbb{S}^1,\mathbb{R})$ and $(\tilde{x},\tilde{t})\in M_F(\Lambda\times \mathbb{S}^1)$ is such that $DF_{(\tilde{x},\tilde{t})}(e^{s,u}_{\tilde{x}})\neq 0$, then $\tilde{x}$ is a corner of rectangle of $\Lambda$.
\end{lemma}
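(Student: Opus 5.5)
We argue by contradiction, separately in the unstable and in the stable direction. Suppose $\tilde{x}$ is not a boundary point in the unstable direction, i.e.\ $\tilde{x}\notin\partial_s\Lambda$. Then $\tilde{x}$ is accumulated from both sides by points of $W^u_{loc}(\tilde{x})\cap\Lambda$. Parametrize $W^u_{loc}(\tilde{x})$ by a $C^r$ curve $\gamma:(-\delta,\delta)\to S$ with $\gamma(0)=\tilde{x}$ and $\gamma'(0)=e^u_{\tilde{x}}$. Consider the one-variable function $h(\tau)=F(\gamma(\tau),\tilde{t})$. It is $C^1$ with $h'(0)=DF_{(\tilde{x},\tilde{t})}(e^u_{\tilde{x}},0)\neq 0$; here $e^u_{\tilde x}$ is viewed as the vector $(e^u_{\tilde x},0)$ in $T_{(\tilde x,\tilde t)}(S\times\mathbb{S}^1)$, which is the meaning of the hypothesis. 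Without loss of generality $h'(0)>0$, the other case being symmetric.

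By the two-sided accumulation there are points $\gamma(\tau_n)\in\Lambda$ with $\tau_n>0$ and $\tau_n\to 0$. For $n$ large, $h(\tau_n)>h(0)$ by the first-order Taylor expansion. Since $\Lambda\times\mathbb{S}^1$ contains $(\gamma(\tau_n),\tilde{t})$, this gives $F(\gamma(\tau_n),\tilde{t})>F(\tilde{x},\tilde{t})$. That contradicts $(\tilde{x},\tilde{t})\in M_F(\Lambda\times\mathbb{S}^1)$. Hence $\tilde{x}\in\partial_s\Lambda$. If $h'(0)<0$ we use points with $\tau_n<0$ instead.

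The same argument along $W^s_{loc}(\tilde{x})$, using $DF_{(\tilde{x},\tilde{t})}(e^s_{\tilde{x}})\neq 0$, shows that $\tilde{x}$ is accumulated from only one side by $W^s_{loc}(\tilde{x})\cap\Lambda$. So $\tilde{x}\in\partial_u\Lambda$, and therefore $\tilde{x}\in\partial_s\Lambda\cap\partial_u\Lambda$, i.e.\ $\tilde{x}$ is a corner of rectangle.

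The only delicate point is that ``accumulated from both sides'' must produce points of $\Lambda$ on the local manifold approaching $\tilde{x}$ tangentially to $e^{u}_{\tilde x}$ (resp.\ $e^s_{\tilde x}$) on the side where $h$ increases. This is immediate once we work inside the $C^1$ curve $W^u_{loc}(\tilde{x})$, since the sign of $h(\tau)-h(0)$ is determined by the sign of $\tau$ for small $|\tau|$. Also, the proof never uses the fiber coordinate: $t=\tilde{t}$ is kept fixed throughout.
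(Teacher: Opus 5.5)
Your proposal is correct and follows essentially the same route as the paper. Both restrict $F(\cdot,\tilde t)$ to $W^u_{loc}(\tilde x)\cap\Lambda$ (resp.\ $W^s_{loc}(\tilde x)\cap\Lambda$), where it attains its maximum at $\tilde x$ with nonzero derivative, so $\tilde x$ cannot be accumulated from both sides; you simply spell out the one-sided first-order step that the paper leaves implicit.
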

\begin{proof}
Suppose $\tilde{x}=(\tilde{x}_s,\tilde{x}_u)$, where we are using the coordinates of the stable and unstable foliation. Observe that $\tilde{x}\in \partial_{s} \Lambda$ because $\tilde{x}_u$ is a point of maximum of the function $f:W^u_{loc}(\tilde{x})\cap \Lambda\rightarrow \mathbb{R}$ given by $f(x_u)=F(\tilde{x}_s,x_u,\tilde{t})$ and $\frac{df}{dx_u}(\tilde{x}_u)=DF_{(\tilde{x},\tilde{t})}(e^u_{\tilde{x}})\neq 0$, which let us conclude that $\tilde{x}$ cannot be accumulated from both sides in $W^u_{loc}(\tilde{x})\cap \Lambda$. As $DF_{(\tilde{x},\tilde{t})}(e^s_{\tilde{x}})\neq 0$, we can also conclude that $\tilde{x}\in \partial_{u} \Lambda$ and then, $\tilde{x}\in \partial_{s} \Lambda\cap \partial_{u}\Lambda$, as we wanted to see.
\end{proof}

In particular, for any $F\in R_{\varphi,\Lambda}$, if $(\tilde{x},\tilde{t})$ is the unique element of $M_F(\Lambda\times \mathbb{S}^1)$
one has that $\tilde{x}$ is a corner of rectangle of $\Lambda$.

\begin{lemma}\label{open}
The set $\mathcal{R}_{\varphi,\Lambda}\subset C^r(S\times \mathbb{S}^1,\mathbb{R})$ is open. 
\end{lemma}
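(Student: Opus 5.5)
The strategy is to fix $F_0\in\mathcal{R}_{\varphi,\Lambda}$ with unique maximum point $(\tilde{x},\tilde{t})$ and show that every $F$ sufficiently $C^r$-close to $F_0$ (in fact $C^2$-closeness suffices) is again in $\mathcal{R}_{\varphi,\Lambda}$. Three conditions have to be preserved: uniqueness of the maximum point, non-vanishing of the derivative along $e^s$ and $e^u$ at that point, and negativity of $\partial^2F/\partial t^2$ there.

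Using the preceding lemma, $\tilde{x}$ is a corner of rectangle of $\Lambda$. Since the conditions $DF_0(e^{s,u}_{\tilde x})\neq0$ are open, a sign analysis shows that $\tilde{x}$ is the unique point of $\Lambda\cap R$ maximizing, for $R$ a small rectangle around $\tilde x$ in stable/unstable coordinates $(x_s,x_u)$. Concretely, since $\tilde x$ is a corner, $\Lambda\cap R$ lies in a quadrant $\{x_s\,\epsilon_s\ge 0,\ x_u\,\epsilon_u\ge0\}$ (after translating $\tilde x$ to the origin). Maximality then forces $\epsilon_s\,\partial_{x_s}F_0<0$ and $\epsilon_u\,\partial_{x_u}F_0<0$ at $(\tilde x,\tilde t)$. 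Both signs persist on a neighborhood $U=R\times(\tilde t-\delta,\tilde t+\delta)$, and $\partial_t^2F_0<0$ there as well, for all $F$ that are $C^2$-close to $F_0$. Here I use that $e^{s,u}_x$ vary continuously on $\Lambda$; more simply, I work in fixed $C^1$ coordinates in which the local stable/unstable laminations are close to the coordinate axes.

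The core step is to show that for $F$ close to $F_0$, any maximum point $(x,t)$ of $F|_{\Lambda\times\mathbb{S}^1}$ lies in $U$ and in fact equals $(\tilde x,t_F)$, with a unique $t_F$. Localization into $U$ follows by compactness: $F_0$ on $(\Lambda\times\mathbb{S}^1)\setminus U$ is bounded by $\max F_0-\eta$ for some $\eta>0$, so $\|F-F_0\|_{C^0}<\eta/2$ forces maxima into $U$. Inside $U$, take $(x,t)\in\Lambda\times\mathbb{S}^1\cap U$ with $x\neq\tilde x$. Using the local product structure, the points $[x,\tilde x]$ and $\tilde x$ lie in $\Lambda$, so I can move from $x$ toward the corner $\tilde x$ first along the local stable leaf and then along the local unstable leaf, staying inside the quadrant. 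By the sign conditions, $F(\cdot,t)$ strictly increases along each leg. Hence $F(x,t)<F(\tilde x,t)$, so every maximum point has first coordinate $\tilde x$. On the fiber $\{\tilde x\}\times(\tilde t-\delta,\tilde t+\delta)$, the function $t\mapsto F(\tilde x,t)$ is strictly concave, so it has at most one maximum there. This gives $\#M_F=1$, with maximum point $(\tilde x,t_F)$, $t_F$ near $\tilde t$.

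Finally, $DF_{(\tilde x,t_F)}(e^{s,u}_{\tilde x})\neq0$ and $\partial_t^2F(\tilde x,t_F)<0$ hold by the openness of the sign conditions on $U$ already arranged. The step I expect to be the main obstacle is the monotonicity argument along leaves. It requires that the local stable/unstable leaves are $C^1$ curves whose tangents stay uniformly close to $e^{s}_{\tilde x}$ and $e^{u}_{\tilde x}$ on $R$. This holds by continuity of the invariant laminations, with leaves of class $C^1$ uniformly. It also requires that the quadrant description of $\Lambda\cap R$ is valid, which I would justify from the corner property and the product structure of a small Markov rectangle with $\tilde x$ at its corner.
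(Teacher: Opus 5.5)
Your proposal is correct and follows the paper's proof. You first confine the maxima of the perturbed function to $R\times I$ by compactness. You then combine the persistent signs of $DG(e^{s,u})$ with the corner structure at $\tilde x$ to force the spatial coordinate to be $\tilde x$, and finally use $\partial_t^2G<0$ to get a unique $\bar t$. Your monotonicity argument along the leaves through $[x,\tilde x]$ is a useful explicit version of the step the paper compresses into ``we conclude that $\bar x=\tilde x$''.
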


\begin{proof}
Consider $F\in \mathcal{R}_{\varphi,\Lambda}$, a rectangle $R$ and an open arc $I\subset \mathbb{S}^1$ such that $(\tilde{x},\tilde{t})\in R\times I$, where $M_F(\Lambda\times \mathbb{S}^1)=\{(\tilde{x},\tilde{t})\}$,  $\tilde{x}$ is a corner of $R$, the derivatives $DF_{(x,t)}(e^{s,u}_{x})$, $\frac{\partial^2F}{\partial t^2}(x,t)$ are far from $0$ and have the same signs of $DF_{(\tilde{x},\tilde{t})}(e^{s,u}_{\tilde{x}})$, $\frac{\partial^2F}{\partial t^2}(\tilde{x},\tilde{t})$ for $(x,t)\in R\times I$. Then, one has that $(\Lambda\times\mathbb{S}^1)\setminus (R\times I)$ is a compact set and then $\max F|_{(\Lambda\times\mathbb{S}^1)\setminus (R\times I)}\\<F(\tilde{x},\tilde{t})$. 

If $G$ is $C^r$-close enough to $F$, each point of maximum of
$G|_{\Lambda\times \mathbb{S}^1}$ belongs to $R\times I$ and $DG_{(x,t)}(e^{s,u}_{x})$ and $\frac{\partial^2G}{\partial t^2}(x,t)$ have the same signs of $DF_{(\tilde{x},\tilde{t})}(e^{s,u}_{\tilde{x}})$ and $\frac{\partial^2F}{\partial t^2}(\tilde{x},\tilde{t})$ for $(x,t)\in R\times I$. In particular, if $(\bar{x},\bar{t})\in M_G(\Lambda\times \mathbb{S}^1)$, as $\bar{g}(x)=G(x,\bar{t})$ has a point of maximum in $\bar{x}$ and $D\bar{g}_{\bar{x}} (e^s_{\bar{x}})=DG_{(\bar{x},\bar{t})}(e^s_{\bar{x}})$ and $D\bar{g}_{\bar{x}} (e^u_{\bar{x}})=DG_{(\bar{x},\bar{t})}(e^u_{\bar{x}})$, we conclude that $\bar{x}=\tilde{x}$. On the other hand $\bar{t}$ is also determined with uniqueness since if $(\tilde{x},\bar{t}_1),(\tilde{x},\bar{t}_2)\in M_G(\Lambda\times \mathbb{S}^1)$ are different, then for the mean value theorem, for some $t_3\in I$ one has
$$\frac{\partial^2G}{\partial t^2}(\tilde{x},t_3)(\bar{t}_2-\bar{t}_1)=\frac{\partial G}{\partial t}(\tilde{x},\bar{t}_2)-\frac{\partial G}{\partial t}(\tilde{x},\bar{t}_1)=0,$$
which is a contradiction. Then $\#M_G(\Lambda\times \mathbb{S}^1)=1$, that concludes the proof of the lemma.
  
\end{proof}

\begin{lemma}\label{denso}
The set $\mathcal{R}_{\varphi,\Lambda}$ is dense in $C^r(S\times \mathbb{S}^1,\mathbb{R})$.
\end{lemma}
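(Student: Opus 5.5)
Given $F\in C^r(S\times \mathbb{S}^1,\mathbb{R})$ and $\epsilon>0$, I will build $G$ with $\|G-F\|_{C^r}<\epsilon$ and $G\in\mathcal{R}_{\varphi,\Lambda}$. The approach is to perturb $F$ in three successive small steps, each adding a $C^r$-small bump supported near a chosen point. The choice of the corner is what makes the first-order nondegeneracy automatic.

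\textbf{Step 1: a unique maximum at a corner.} Pick any $(x_0,t_0)\in M_F(\Lambda\times\mathbb{S}^1)$. Since the set $\partial_s\Lambda\cap\partial_u\Lambda$ of corners is dense in $\Lambda$, choose a corner $\hat x$ very close to $x_0$. By continuity, $F(\hat x,t_0)$ is then within $\delta$ of $\max F|_{\Lambda\times\mathbb{S}^1}$. Let $\psi$ be a smooth bump on $S\times\mathbb{S}^1$ with $\psi(\hat x,t_0)=1$, $0\le\psi<1$ elsewhere, a strict maximum at $(\hat x,t_0)$, and small support. Set
\[
F_1=F+\eta\psi,
\]
where $\eta$ is small but larger than the gap $\delta$. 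The $C^r$ size of $\eta\psi$ stays small if $\delta\ll\eta$ and the support radius is kept fixed, so we first fix $\eta$ and the support, and then take $\hat x$ close enough. The maximum of $F_1$ on $\Lambda\times\mathbb{S}^1$ is then near $(\hat x,t_0)$, but not yet forced to be exactly there.

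\textbf{Step 2: forcing the maximum onto the corner with nondegenerate derivatives.} Cleaner is to work in local coordinates $(x_s,x_u,t)$ near $(\hat x,t_0)$, adapted to the stable/unstable foliations, with $\hat x$ a corner, so that $\Lambda$ near $\hat x$ lies in a quadrant, say $x_s\ge0$, $x_u\ge0$. Add the perturbation
\[
\rho(x,t)\bigl(-a x_s-a x_u-b(t-t_0)^2\bigr)+\eta\psi,
\]
with $\rho$ a cutoff and $a,b>0$ small. On $\Lambda$ near $\hat x$ the linear terms are nonpositive and vanish only at $\hat x$. The bump $\eta\psi$ dominates the rest, so the global maximum of the new function $G$ on $\Lambda\times\mathbb{S}^1$ must lie near $(\hat x,t_0)$. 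To make it exactly $(\hat x,t_0)$, subtract from $G$ the affine-plus-quadratic Taylor part of $F$ at $(\hat x,t_0)$ inside the cutoff; this is small since the modification is localized and the corner is chosen near $x_0$. The local model then becomes
\[
G=\text{const}-a x_s-a x_u-b(t-t_0)^2+O\bigl(\|\cdot\|^3\bigr)\cdot\text{small},
\]
whose unique maximum on the quadrant is the origin. This gives $DG(e^{s,u}_{\hat x})=-a\neq0$ and $\partial_t^2G(\hat x,t_0)=-2b<0$. Outside the neighborhood, $G\le F+\text{small}<\max G-\eta/2$ thanks to the bump, so the maximum is unique.

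\textbf{Main obstacle.} The hard part is to make all these perturbations simultaneously $C^r$-small while ensuring the strict global inequality away from $(\hat x,t_0)$. My plan is to fix the localization scale $r_0$ first, so the $C^r$ norms of the cutoffs scale like $r_0^{-r}$ times the amplitudes. Then choose $\eta$, $a$ and $b$ small relative to $\epsilon r_0^r$. Only after that, choose the corner $\hat x$ close enough to $x_0$, so that the Taylor-cancellation term and the gap $\delta$ are negligible compared with $\eta$. The lemma then follows.
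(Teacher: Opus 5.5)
\textbf{There is a genuine gap in Step 2, at the Taylor cancellation.} You claim that subtracting $\rho$ times the affine-plus-quadratic Taylor part of $F$ at $(\hat x,t_0)$ is a small perturbation. That is false.

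At $(x_0,t_0)\in M_F(\Lambda\times\mathbb{S}^1)$ only $\partial_tF$ is forced to vanish, because the whole fiber $\{x_0\}\times\mathbb{S}^1$ lies in $\Lambda\times\mathbb{S}^1$. Since $\Lambda$ has empty interior, $D_xF_{(x_0,t_0)}$ can be of order one in any direction in which $\Lambda$ is one-sided at $x_0$. For example, for every $F\in\mathcal{R}_{\varphi,\Lambda}$ both $DF(e^s)$ and $DF(e^u)$ are nonzero at the maximum. Near $(\hat x,t_0)$ the subtracted term has derivative $DF_{(\hat x,t_0)}$, so its $C^1$ norm is at least about $\abs{D_xF_{(x_0,t_0)}}$ (and its $C^0$ norm at least $\abs{F(\hat x,t_0)}$). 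This does not shrink as $\hat x\to x_0$, so the order of choices in your last paragraph cannot fix it.

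Without the subtraction the construction really fails. The bump $\psi$ has zero derivative at its peak, so to first order at $(\hat x,t_0)$ the function $G$ has the slopes of $F$ shifted by $a$. Concretely, suppose $x_0\in\partial_s\Lambda$ with $DF_{(x_0,t_0)}(e^u_{x_0})\neq0$, necessarily pointing out of $\Lambda$. Arbitrarily close to $x_0$ there are corners $\hat x$ whose $\Lambda$-side in the unstable direction is opposite to that of $x_0$. To find one, take the endpoint nearer to $x_0$ of a small gap of the Cantor set $W^u_{loc}(x_0)\cap\Lambda$, and a corner on its local stable leaf. At such $\hat x$, $F$ increases into $\Lambda$ at a rate close to $\abs{DF_{(x_0,t_0)}(e^u_{x_0})}$. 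No $C^1$-small, hence no $C^r$-small, perturbation can make $(\hat x,\cdot)$ even a local maximum on $\Lambda\times\mathbb{S}^1$.

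A secondary point: coordinates adapted to the stable/unstable laminations are in general only $C^{1+\epsilon}$, so $-ax_s-ax_u$ is not in $C^r$. You would need a linear functional in a smooth chart instead.

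\textbf{The missing idea is to make the corner an output rather than an input.} By the earlier lemma, any maximum point with $DF(e^{s,u})\neq0$ is automatically a corner. The paper argues this way:
\begin{enumerate}
\item It takes a Morse approximation $F_1$ with no critical points on $\Lambda\times\mathbb{S}^1$.
\item It adds a small nonnegative function whose unique maximum is \emph{exactly} an existing maximum point $(\tilde x,\tilde t)$ of $F_1$. Uniqueness is then immediate, with no competition between the bump and the slope of $F$.
\item It adds a localized $-a(t-\tilde t)^2$ to get $\partial_t^2<0$.
\item Since $\partial_tF_3(\tilde x,\tilde t)=0$ but $(\tilde x,\tilde t)$ is not critical, at least one of $DF_3(e^s_{\tilde x})$, $DF_3(e^u_{\tilde x})$ is nonzero, say the unstable one. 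This persists under perturbation and confines the maxima of nearby functions to $W^s_{loc}(\tilde x)\cap\Lambda$.
\item That set has Hausdorff dimension $<1$, so the image of $(W^s_{loc}(\tilde x)\cap\Lambda)\times\mathbb{R}$ under the $C^1$ map $\theta^s$ is Lebesgue-null. Hence there are arbitrarily small $v$ such that subtracting $\langle v,\cdot\rangle$ in a chart makes the stable derivative nonzero at every candidate maximum point.
\end{enumerate}
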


\begin{proof}
Remember that the set $M$ of Morse functions is dense in $C^r(S\times \mathbb{S}^1,\mathbb{R})$ and that if $F$ is a Morse function, then $Crit(F)=\left\{z\in S\times \mathbb{S}^1: Df_{z}=0\right\}$ is a discrete set. In particular, since $S\times \mathbb{S}^1$ is a compact set, we have $\# Crit(F)<\infty$ and as $int (\Lambda\times \mathbb{S}^1)=\emptyset$, we can find some small perturbation $F_1\in C^r(S\times \mathbb{S}^1,\mathbb{R})$ of $F$ such that $(\Lambda\times \mathbb{S}^1)\cap Crit(F_1)=\emptyset$. Fix $\epsilon>0$, some point $(\tilde{x},\tilde{t})\in M_{F_1}(\Lambda\times \mathbb{S}^1)$ and consider a non-negative function $F_{\epsilon}$, $\epsilon$-close to the constant function $0$ and such that it has in $(\tilde{x},\tilde{t})$ a unique point of maximum. If $\epsilon$ is small enough, the function $F_2=F_1+F_{\epsilon}$ is a small perturbation of $F$ that satisfies $(\Lambda\times \mathbb{S}^1)\cap Crit(F_2)=\emptyset$ and $M_{F_2}(\Lambda\times \mathbb{S}^1)=\{(\tilde{x},\tilde{t})\}$. Note that if we consider a non-positive small perturbation, $F_a\in C^r(\mathbb{S}^1,\mathbb{R})$, of the function $0$ that close to $\tilde{t}$ takes the form $-a(t-\tilde{t})^2$ where $a>0$ is small then, for $F_3=F_2+F_a$, one has $M_{F_3}(\Lambda\times \mathbb{S}^1)=\{(\tilde{x},\tilde{t})\}$, $(\Lambda\times \mathbb{S}^1)\cap Crit(F_3)=\emptyset$ and $\frac{\partial^2 F_3}{\partial t^2}(\tilde{x},\tilde{t})<0$. Arguing as in the proof of Proposition \ref{differentiable} and Lemma \ref{open}, one concludes that, for every $x$ close to $\tilde{x}$, the map $g_x(t)=F_3(x,t)$ has a unique point of maximum $\tilde{t}(x)$ close to $\tilde{t}$, and that $\tilde{t}(x)$ is a $C^{r-1}$ function of $x$ defined in a neighborhood $W\subset S$ of $\tilde{x}$. 

As the function $g:\mathbb{S}^1\rightarrow \mathbb{R}$ given by $g(t)=F_3(\tilde{x},t)$ has a point of maximum in $\tilde{t}$, one has that $\frac{\partial F_3}{\partial t}(\tilde{x},\tilde{t})=0$ and then we conclude that either $D{F_3}_{(\tilde{x},\tilde{t})}(e^s_{\tilde{x}})\neq 0$ or $D{F_3}_{(\tilde{x},\tilde{t})}(e^u_{\tilde{x}})\neq 0$. If both $D{F_3}_{(\tilde{x},\tilde{t})}(e^s_{\tilde{x}})$ and $D{F_3}_{(\tilde{x},\tilde{t})}(e^u_{\tilde{x}})$ are non-zero, there is nothing to do. Otherwise, consider the function $f:W\rightarrow \mathbb{R}$ given by $f(x)=F_3(x,\tilde{t}(x))$ (note that $Df_x (e^{s,u}_x)=D{F_3}_{(x,\tilde{t}(x))}(e^{s,u}_x)$) and suppose that $Df_{\tilde{x}} (e^s_{\tilde{x}})=D{F_3}_{(\tilde{x},\tilde{t})}(e^s_{\tilde{x}})= 0$ and $Df_{\tilde{x}} (e^u_{\tilde{x}})=D{F_3}_{(\tilde{x},\tilde{t})}(e^u_{\tilde{x}})\neq 0$.

Then, there is a $C^r$-neighborhood $\mathcal{V}$ of $f$ (in $C^r(S,\mathbb{R})$) and a neighborhood $U$ of $\tilde{x}$, such that, if $x\in U\cap \Lambda$ and $\tilde{f}\in \mathcal{V}$, then $D\tilde{f}_x(e_x^u)\neq 0$. Let $\mathcal{R}$ be a Markov partition of $\Lambda$ such that the element $R_{\tilde{x}}$ of $\mathcal{R}$ containing $\tilde{x}$ is contained in $U$. Without loss of generality, we can assume that $U$ is contained in a local chart $\varphi:\tilde{U}\subset M\rightarrow V\subset \mathbb{R}^2$ with $U\subset \tilde{U}$ and $\tilde{U}\cap \tilde{R}=\emptyset$ for all $\tilde{R}\in \mathcal{R}\setminus \{R_{\tilde{x}}\}$. Observe that since $D\tilde{f}_{\tilde{x}}(e^u_{\tilde{x}})\neq 0$, $\tilde{x}\in \partial_s \Lambda$. Therefore the possible maximum points of $\tilde{f}$ in $\Lambda\cap R_{\tilde{x}}$ are on $W^s_{loc}(\tilde{x})\cap \Lambda$, which has zero Lebesgue measure. Consider the function $\theta^s:(W^s_{loc}(\tilde{x})\cap \Lambda)\times \mathbb{R}\rightarrow \mathbb{R}^2$ defined by 
$$\theta^s(x,y)=\nabla(f\circ\varphi^{-1})(\varphi(x))-y \begin{pmatrix}
0 & -1  \\
1 & 0 \\
\end{pmatrix}
D\varphi_x(e^s_x),$$
where the above matrix is the orthogonal rotation. Since $\theta^s$ extends to a $C^1$-function, then the Lebesgue measure of $\theta^s((W^s_{loc}(\tilde{x})\cap \Lambda)\times \mathbb{R})$ is zero. Therefore, there is some  $v\in\mathbb{R}^2$ with norm arbitrarily small such that $v\notin \theta^s((W^s_{loc}(\tilde{x})\cap \Lambda)\times \mathbb{R})$. Put $h(z)=f\circ \varphi^{-1}(z)-\langle v,z\rangle$ for $z\in V$. Thus $D(h\circ \varphi)_xe^s_x=Dh_{\varphi(x)}D\varphi_x(e^s_x)\neq 0$ for all $x\in W^s_{loc}(\tilde{x})\cap \Lambda$. 

Consider then the perturbation $F_4$ of $F_3$ given for $(x,t)\in \tilde{U}\times \mathbb{S}^1$ by $F_4(x,t)=F_3(x,t)-\langle v,\varphi(x)\rangle$ and take $(\bar{x},\bar{t})\in M_{F_4}(\Lambda\times \mathbb{S}^1)$. Then $(\bar{x},\bar{t})$ is close to $(\tilde{x},\tilde{t})$, and thus $\bar{x}\in W$ and $\bar{t}=\tilde{t}(\bar{x})$. By construction, we have that both $D{F_4}_{(\tilde{x},\tilde{t})}(e^s_{\tilde{x}})$ and $D{F_4}_{(\tilde{x},\tilde{t})}(e^u_{\tilde{x}})$ are non-zero, and we are done.

\end{proof}

\subsection{Interior of the Lagrange spectrum}

Fix a diffeomorphism $\varphi$ of a surface $S$ with a horseshoe $\Lambda$ and associate with each $x\in S$ an orientation preserving diffeomorphism $R_x:\mathbb{S}^1\rightarrow \mathbb{S}^1$. Now, consider the set $L_{\Lambda\times \mathbb{S}^1,\Phi,F}$ where $\Phi:S\times \mathbb{S}^1\rightarrow S\times \mathbb{S}^1$ given by $\Phi(x,t)=(\varphi(x),R_x(t))$ is supposed to be a diffeomorphism (with inverse given by $\Phi^{-1}(x,t)=(\varphi^{-1}(x),R^{-1}_{\varphi^{-1}(x)}(t))$) and $F:S\times \mathbb{S}^1\rightarrow \mathbb{R}$ is differentiable. To simplify, given a family of diffeomorphisms $\{R_s \}_{s=0}^n$ of $\mathbb{S}^1$ where $n\in \mathbb{N}$, define $\bigcirc_{s=0}^n R_s:=  
R_n \circ \dots \circ R_1\circ R_0$. Thus with this notation, given $(x,t)\in S\times \mathbb{S}^1$ one has $\Phi^n(x,t)=(\varphi^n(x),\bigcirc_{s=0}^{n-1} R_{\varphi^s(x)}(t))$. We start with the following lemma

\begin{lemma}\label{pertur}
Suppose one has two sequences $\{R_n\}_{n\in \mathbb{N}}$ and $\{\tilde{R}_n\}_{n\in \mathbb{N}}$ of real functions such that each $R_n$ is differentiable and there is a constant $C>0$ with $\max_{x\in \mathbb{R}}\abs{R^{'}_n(x)}\leq C$ for all $n\in \mathbb{N}$. Given $n\in \mathbb{N}$ define $G_n=\bigcirc_{\tilde{s}=1}^{n} R_{\tilde{s}}$ and $\tilde{G}_n=\bigcirc_{\tilde{s}=1}^{n} \tilde{R}_{\tilde{s}}$. Then, if each $r_n=\tilde{R}_n-R_n$ is bounded, one has 
$$\tilde{G}_n=G_n+\Sigma_{k=1}^n \tilde{r}^n_k$$
where for $k=1,\dots,n$
$$\max_{x\in \mathbb{R}}\abs{\tilde{r}^n_k(x)}\leq C^{n-k}\cdot \max_{x\in \mathbb{R}}\abs{r_k(x)}.$$
\end{lemma}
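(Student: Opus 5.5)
The natural approach is a telescoping decomposition that swaps the maps $R_k$ for $\tilde{R}_k$ one at a time, starting from the innermost. By the paper's convention, $G_n=R_n\circ\dots\circ R_1$ and $\tilde G_n=\tilde R_n\circ\dots\circ\tilde R_1$. For $k=1,\dots,n$ I would define the hybrid compositions
$$H_k:=R_n\circ\dots\circ R_{k+1}\circ\tilde R_k\circ\dots\circ\tilde R_1,$$
with $H_0:=G_n$. The outer block $R_n\circ\dots\circ R_{k+1}$ is empty, hence the identity, when $k=n$, so $H_n=\tilde G_n$. This gives the telescoping identity
$$\tilde G_n-G_n=\sum_{k=1}^n (H_k-H_{k-1}),$$
and I would simply set $\tilde r^n_k:=H_k-H_{k-1}$.

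Next I bound each term. Write $P_k:=R_n\circ\dots\circ R_{k+1}$, again the identity when $k=n$, and $y:=\tilde R_{k-1}\circ\dots\circ\tilde R_1(x)$, with $y=x$ when $k=1$. Then
$$H_k(x)-H_{k-1}(x)=P_k(\tilde R_k(y))-P_k(R_k(y)).$$
Each $R_j$ is differentiable with $|R_j'|\le C$, so by the mean value theorem each $R_j$ is $C$-Lipschitz on $\mathbb{R}$. Therefore $P_k$, a composition of $n-k$ such maps, is $C^{n-k}$-Lipschitz. Consequently
$$|\tilde r^n_k(x)|\le C^{n-k}\,|\tilde R_k(y)-R_k(y)|=C^{n-k}|r_k(y)|\le C^{n-k}\sup_{\mathbb{R}}|r_k|.$$
Taking the supremum over $x$ gives the claimed estimate. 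Reading the statement's $\max$ as a supremum, this is finite because $r_k$ is bounded.

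I do not expect a real obstacle. The only points needing care are the bookkeeping of the composition order (the notation $\bigcirc$ applies $R_1$ first, as fixed earlier), the observation that no regularity of the $\tilde R_k$ is used (only that the outer maps are $R_j$'s, which is exactly why the hybrids are arranged with the $\tilde R$'s on the inside), and the boundary cases $k=1$ and $k=n$, where the inner or outer block is empty.
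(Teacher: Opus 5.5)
Your proof is correct, but it organizes the argument differently from the paper. The paper inducts on $n$. It writes $\tilde G_{n+1}=R_{n+1}\bigl(G_n+\sum_k\tilde r^n_k\bigr)+r_{n+1}\circ\tilde G_n$ and applies the mean value theorem once to $R_{n+1}$. All previous error terms are then multiplied by the same factor $R'_{n+1}(\eta_{x,n})$, and the new term is $r_{n+1}\circ\tilde G_n$. Unrolled, the paper's terms are $\tilde r^n_k(x)=\bigl(\prod_{j=k+1}^{n}R_j'(\eta_{x,j-1})\bigr)\,r_k(\tilde G_{k-1}(x))$, with $\eta_{x,j-1}$ between $G_{j-1}(x)$ and $\tilde G_{j-1}(x)$.

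Your hybrid telescoping produces $\tilde r^n_k=P_k\circ\tilde G_k-P_k\circ R_k\circ\tilde G_{k-1}$ instead. These are different functions; only the last term, $r_n\circ\tilde G_{n-1}$, coincides. The estimate nevertheless comes from the same mechanism: each outer map $R_j$ can magnify the $k$-th discrepancy by at most a factor $C$.

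Your version is non-inductive and canonical, since it requires no choice of intermediate points $\eta_{x,n}$. It also uses only that each $R_j$ is $C$-Lipschitz, so differentiability is not really needed. The paper's version gives each term an explicit product form that is linear in $r_k$, at the cost of those choices. For the application in Lemma \ref{lemarota}, where the $R_j$ are lifts of a rotation and $C=1$, both give the same bound $\sup\abs{\tilde G_n-G_n}\le\sum_{k=1}^n\sup\abs{r_k}$.
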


\begin{proof}
The proof is by induction: if $n=1$, we can take $\tilde{r}^1_1=r_1$. Now, suppose that $\tilde{G}_n=G_n+\Sigma_{k=1}^n \tilde{r}^n_k$ where 
for $k=1,\dots,n$ one has $\max_{x\in \mathbb{R}}\abs{\tilde{r}^n_k(x)}\leq C^{n-k}\cdot \max_{x\in \mathbb{R}}\abs{r_k(x)}.$ Given $x\in \mathbb{R}$ one has by the mean value theorem
\begin{eqnarray*}
\tilde{G}_{n+1}(x)&=&\tilde{R}_{n+1}(\tilde{G}_n(x))=\tilde{R}_{n+1}(G_n(x) +\Sigma_{k=1}^n \tilde{r}^n_k(x))\\ &=& R_{n+1}(G_n(x) +\Sigma_{k=1}^n \tilde{r}^n_k(x))+r_{n+1}(G_n(x) +\Sigma_{k=1}^n \tilde{r}^n_k(x))\\&=&R_{n+1}(G_n(x))+R^{'}_{n+1}(\eta_{x,n})\cdot(\Sigma_{k=1}^n \tilde{r}^n_k(x))+ r_{n+1}(G_n(x) +\Sigma_{k=1}^n \tilde{r}^n_k(x))\\ &=& G_{n+1}(x)+ \Sigma_{k=1}^{n+1} \tilde{r}^{n+1}_k(x),
\end{eqnarray*}
where $\eta_{x,n}$ is between $G_n(x)$ and $G_n(x)+\Sigma_{k=1}^n \tilde{r}^n_k(x)$. Thus
$$\tilde{G}_{n+1}(x)=G_{n+1}(x)+ \Sigma_{k=1}^{n+1} \tilde{r}^{n+1}_k(x),$$
where $\tilde{r}^{n+1}_k(x)=R^{'}_{n+1}(\eta_{x,n})\cdot \tilde{r}^n_k(x)$ for $k=1,\dots n$ and $\tilde{r}^{n+1}_{n+1}(x)=r_{n+1}(G_n(x) +\Sigma_{k=1}^n \tilde{r}^n_k(x))$, and then 
$$\max_{x\in \mathbb{R}}\abs{\tilde{r}^{n+1}_k(x)}\leq C\cdot \max_{x\in \mathbb{R}}\abs{\tilde{r}^{n}_k(x)} \leq C^{n+1-k}\cdot \max_{x\in \mathbb{R}}\abs{r_k(x)} $$
if $k=1,\dots n$ and 
$$\max_{x\in \mathbb{R}}\abs{\tilde{r}^{n+1}_{n+1}(x)}\leq \max_{x\in \mathbb{R}}\abs{r^{n+1}(x)}=C^0\cdot \max_{x\in \mathbb{R}}\abs{r^{n+1}(x)} .$$
This finishes the proof of the lemma.
\end{proof}

\begin{lemma}\label{lemarota}
 Let $q\in S$ be a periodic point of $\varphi$ of period $k\in \mathbb{N}$, such that $q=\Pi^{-1}(\bar{Q})$ for some admissible word 
 $Q$ and $\psi$ a diffeomorphism of $\mathbb{S}^1$. Suppose that $\bigcirc_{s=0}^{k-1} R_{\varphi^s(q)}$ has irrational rotation number and the conjugacy given by Denjoy's theorem is a diffeomorphism. Suppose $H_1$ and  $H_2$ are infinite sequences on the left and on the right respectively such that $H_1QH_2$ is also admissible. Then, given $\epsilon>0$, $r\in \mathbb{N}$ and $t_1,t_2\in \mathbb{S}^1$ there are arbitrary large numbers $N\in\mathbb{N}$ such that if $p_N=\Pi^{-1}(H_1;Q^NH_2)$ then one has
$$d(\psi\circ \bigcirc_{s=0}^{N\cdot k-1+r} R_{\varphi^s(p_N)}(t_1),t_2)<\epsilon.$$
\end{lemma}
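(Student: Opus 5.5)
Set $T:=\bigcirc_{s=0}^{k-1} R_{\varphi^s(q)}$, the return map of the fiber dynamics along the periodic orbit of $q$. By hypothesis $T$ has irrational rotation number $\rho$, and Denjoy's theorem gives a conjugacy $h$, here a diffeomorphism, with $T=h^{-1}\circ \mathrm{rot}_\rho\circ h$. The plan has three steps: compare the fiber composition along $p_N$ with $T^N$, use minimality of $T$ to steer $T^N$ toward a prescribed point, and then conjugate by $h$ to obtain uniform control.

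\emph{Step 1: decomposition along the orbit of $p_N$.} The point $p_N=\Pi^{-1}(H_1;Q^NH_2)$ spends times $0,\dots,Nk-1$ shadowing the orbit of $q$, since its forward itinerary starts with $Q^N$. Its backward itinerary is $H_1$, which is fixed and does not depend on $N$. Write
$$\bigcirc_{s=0}^{Nk-1+r} R_{\varphi^s(p_N)}=E_N\circ \bigcirc_{s=0}^{Nk-1} R_{\varphi^s(p_N)},\qquad E_N:=\bigcirc_{s=Nk}^{Nk-1+r} R_{\varphi^s(p_N)}.$$
The points $\varphi^{Nk}(p_N)=\Pi^{-1}(H_1Q^N;H_2)$ converge as $N\to\infty$ to $p_\infty:=\Pi^{-1}(\bar Q;H_2)\in W^s(q)$. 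Since $x\mapsto R_x$ is continuous, $E_N\to E:=\bigcirc_{s=0}^{r-1} R_{\varphi^s(p_\infty)}$ uniformly.

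\emph{Step 2: comparison of the middle block with $T^N$.} This is the main obstacle. The middle composition is a product of $Nk$ maps, and the obvious route is Lemma \ref{pertur} with $R_n$ the maps along the orbit of $q$ and $\tilde R_n$ those along $p_N$. One has $d(\varphi^{s}(p_N),\varphi^{s}(q))\le C\lambda^{\min(s,Nk-s)}$ by hyperbolicity, since the orbit is close to that of $q$ except near both ends. However, the bound $C^{n-k}$ in Lemma \ref{pertur} blows up unless the derivatives of the $R$'s are essentially bounded by $1$. The fix is to conjugate by $h$ first: replace each $R_{\varphi^s(\cdot)}$ by $h\circ R_{\varphi^s(\cdot)}\circ h^{-1}$ at the ends of each period block, so that the block composition along $q$ becomes $\mathrm{rot}_\rho$, an isometry. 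I would then apply Lemma \ref{pertur} period-block by period-block, with $C$ equal to the derivative bound of the composition of the conjugated maps. The errors $r_j$ decay like $\lambda^{\min(j,N-j)}$. Splitting the estimate at $j=N/2$, the two tails are geometric series, so one gets
$$h\circ \bigcirc_{s=0}^{Nk-1}R_{\varphi^s(p_N)}\circ h^{-1}=\mathrm{rot}_{\rho}^N\circ G_N+o(1)\quad\text{or}\quad =\,G'\circ\mathrm{rot}_\rho^N\circ G''+o(1),$$
where $G'$ and $G''$ are limit diffeomorphisms coming from the two ends. Near the start, $p_N$ tends to $\Pi^{-1}(H_1;\bar Q)\in W^u(q)$, so the initial blocks converge to a fixed composition $G''$. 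The delicate point is to control this limit uniformly. Because the deviations are summable and each is a $C^1$-small perturbation of an isometry, the composed derivative stays bounded, and Lemma \ref{pertur} applied with $C\approx 1$ on the middle range keeps the total error small.

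\emph{Step 3: choice of $N$.} Assembling the pieces gives
$$\psi\circ\bigcirc_{s=0}^{Nk-1+r} R_{\varphi^s(p_N)}(t_1)= \psi\circ E\circ h^{-1}\circ G'\circ\mathrm{rot}_\rho^N\bigl(G''(h(t_1))\bigr)+o(1).$$
The map $\Psi:=\psi\circ E\circ h^{-1}\circ G'$ is a homeomorphism. Since $\rho$ is irrational, the sequence $\mathrm{rot}^N_\rho(u)$ with $u=G''(h(t_1))$ returns arbitrarily close to $\Psi^{-1}(t_2)$ for infinitely many arbitrarily large $N$. For such $N$ with the $o(1)$ term below $\epsilon/2$, and using the uniform continuity of $\Psi$, we obtain $d(\cdot,t_2)<\epsilon$.
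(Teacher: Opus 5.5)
Your strategy is the paper's: conjugate by the Denjoy diffeomorphism so that the period blocks along $q$ become $\mathrm{rot}_\rho$, apply Lemma~\ref{pertur} blockwise, use the two-sided exponential shadowing of the orbit of $q$ by $p_N$, let the end pieces converge, and choose $N$ by minimality. The gap is in Step~2, which as written does not give an $o(1)$ error.

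\begin{itemize}
\item The block errors are of size $\lambda^{\min(j,N-j)}$ for $0\le j\le N-1$. After splitting at $N/2$, each half is a geometric series \emph{starting at $j=0$}, so the total is of order $1/(1-\lambda)$: bounded, but not small and not decreasing in $N$.
\item This cannot be avoided, because the first and last few blocks are not close to $\mathrm{rot}_\rho$. Indeed $p_N\to\Pi^{-1}(H_1;\bar Q)$ and $\varphi^{Nk}(p_N)\to\Pi^{-1}(\bar Q;H_2)$, and these points are off the orbit of $q$. (Incidentally, the first lies in $W^s(q)$ and the second in $W^u(q)$, not the other way round.)
\item ``$C\approx 1$'' does not suffice. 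Any uniform $C>1$ makes the factor $C^{n-k}$ in Lemma~\ref{pertur} grow exponentially over $N$ blocks. The reference maps there must be exactly (lifts of) $\mathrm{rot}_\rho$, so that $C=1$.
\end{itemize}

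The missing device, which is exactly what the paper does, is to remove a \emph{fixed} number $N_1$ of blocks at each end.
\begin{itemize}
\item Choose $N_1$ so that $\sum_{j\ge N_1}\lambda^j$ is small. Lemma~\ref{pertur} with $C=1$, applied only to blocks $N_1,\dots,N-N_1-1$, puts the middle composition uniformly within a small $\rho$ of $T^{N-2N_1}$.
\item The first $kN_1$ maps and the last $kN_1+r$ maps are \emph{finite} compositions. By continuity of $x\mapsto R_x$ they converge uniformly, as $N\to\infty$, to fixed maps $A$ and $B$ built along $\Pi^{-1}(H_1;\bar Q)$ and $\Pi^{-1}(\bar Q;H_2)$.
\item Because $A$ and $B$ do not depend on $N$, density of the $T$-orbit of $A(t_1)$ gives arbitrarily large $N$ with $T^{N-2N_1}(A(t_1))$ close to $(\psi\circ B)^{-1}(t_2)$. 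This is all your Step~3 needs.
\end{itemize}

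Your $N$-independent maps $G',G''$ are asserted rather than constructed. Write $a_j$ and $b_{-j}$ for the $h$-conjugated period blocks along $\Pi^{-1}(H_1;\bar Q)$ (forward times) and along $\Pi^{-1}(\bar Q;H_2)$ (backward times).
\begin{itemize}
\item The normalized compositions $\mathrm{rot}_\rho^{-n}\circ a_{n-1}\circ\cdots\circ a_0$ are indeed uniformly Cauchy, since each new factor acts on the outside.
\item In $b_{-1}\circ\cdots\circ b_{-n}\circ\mathrm{rot}_\rho^{-n}$, however, the new factor acts on the inside. Convergence then needs equicontinuity of the partial compositions, i.e.\ summable $C^1$ deviations of $b_{-j}$ from $\mathrm{rot}_\rho$.
\item That requires quantitative regularity of $x\mapsto R_x$ and of $h'$. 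You only gesture at this, and it is more than the truncated argument needs.
\end{itemize}
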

\begin{proof}

Let $g:\mathbb{S}^1\rightarrow \mathbb{S}^1$ be the diffeomorphism given by Denjoy's theorem   such that $g\circ\bigcirc_{s=0}^{k-1} R_{\varphi^s(q)} \circ g^{-1}={\text{rot}}_{\alpha}$. Additionally, note that the function $P:S^k\times \mathbb{S}^1\rightarrow \mathbb{S}^1$
given by $P(x_0,\dots, x_{k-1},t)=(g \circ \bigcirc_{s=0}^{k-1} R_{x_s}\circ g^{-1})(t)$ is differentiable, then we can find a constant $\tilde{C}_1>0$ such that for $x_0,\dots,x_{k-1}, \tilde{x}_0,\dots,\tilde{x}_{k-1}\in S$ one has
\begin{equation}
\max_{t\in \mathbb{S}^1}d(P(x_0,\dots, x_{k-1},t),P(\tilde{x}_0,\dots, \tilde{x}_{k-1},t))\leq \tilde{C}_1\cdot \Sigma_{s=0}^{k-1}d(x_s,\tilde{x}_s).
\end{equation}

In particular, if $N_1, N\in \mathbb{N}$ are such that $N>2N_1$,     $\tilde{s}\in \{N_1,\dots, N-N_1-1\}$ and  $x_0=\varphi^{\tilde{s}\cdot k}(p_N),x_1=\varphi^{\tilde{s}\cdot k+1}(p_N),\dots, x_{k-1}=\varphi^{\tilde{s}\cdot k+k-1}(p_N)$, $\tilde{x}_0=\varphi^{\tilde{s}\cdot k}(q)=q,\tilde{x}_1=\varphi^{\tilde{s}\cdot k+1}(q)=\varphi(q),\dots, \tilde{x}_{k-1}=\varphi^{\tilde{s}\cdot k+k-1}(q)=\varphi^{k-1}(q)$ one has
\begin{equation}\label{2.2}
\begin{aligned}
 & \hspace{0.5cm}   \max_{t\in \mathbb{S}^1}d((g \circ \bigcirc_{s=0}^{k-1} R_{\varphi^{\tilde{s}\cdot k+s}(p_N)}\circ g^{-1})(t),{\text{rot}}_{\alpha}(t)) \\
 &= \max_{t\in \mathbb{S}^1}d((g \circ \bigcirc_{s=0}^{k-1} R_{\varphi^{\tilde{s}\cdot k+s}(p_N)}\circ g^{-1})(t),(g \circ \bigcirc_{s=0}^{k-1} R_{\varphi^s(q)}\circ g^{-1})(t)) \\
 &\leq \tilde{C}_1\cdot \Sigma_{s=0}^{k-1}d(\varphi^{\tilde{s}\cdot k+s}(p_N),\varphi^s(q)). 
\end{aligned}
\end{equation}

Additionally, we have for some constant $\tilde{C}_2>0$ 

\begin{equation}\label{2.3}
\begin{aligned}
 & \hspace{0.5cm}   \max_{t\in \mathbb{S}^1}d((\bigcirc_{s=kN_1}^{N\cdot k-k\cdot N_1-1} R_{\varphi^s(p_N)})(t),(\bigcirc_{s=0}^{k-1} R_{\varphi^s(q)})^{N-2N_1}(t)) \\
 &= \max_{t\in \mathbb{S}^1}d((g^{-1}\circ g\circ \bigcirc_{s=kN_1}^{N\cdot k-k\cdot N_1-1} R_{\varphi^s(p_N)}\circ g^{-1})(g(t)),(g^{-1}\circ {\text{rot}}_{\alpha}^{N-2N_1})(g(t))) \\
 &\leq \tilde{C}_2\cdot \max_{t\in \mathbb{S}^1}d((g \circ \bigcirc_{s=kN_1}^{N\cdot k-k\cdot N_1-1} R_{\varphi^s(p_N)}\circ g^{-1})(t),({\text{rot}}_{\alpha}^{N-2N_1})(t))\\ 
 &= \tilde{C}_2\cdot \max_{t\in \mathbb{S}^1}d((\bigcirc_{\tilde{s}=N_1}^{N-N_1-1}  g\circ \bigcirc_{s=0}^{k-1}R_{\varphi^{\tilde{s}\cdot k+s}(q)}\circ g^{-1})(t),({\text{rot}}_{\alpha}^{N-2N_1})(t)). 
\end{aligned}
\end{equation}

By (\ref{2.2}), (\ref{2.3}) and Lemma \ref{pertur} applied to some lifts $R_{\tilde{s}}$ of ${\text{rot}}_{\alpha}$ (that has derivative equal to $1$) and $\tilde{R}_{\tilde{s}}$ of $g \circ \bigcirc_{s=0}^{k-1}R_{\varphi^{\tilde{s}\cdot k+s}(p_N)}\circ g^{-1}$, one concludes after projecting again to $\mathbb{S}^1$ that 
\begin{equation}\label{2.4}
\begin{aligned}
 & \hspace{0.5cm}   \max_{t\in \mathbb{S}^1}d((\bigcirc_{s=kN_1}^{N\cdot k-k\cdot N_1-1} R_{\varphi^s(p_N)})(t),(\bigcirc_{s=0}^{k-1} R_{\varphi^s(q)})^{N-2N_1}(t)) \\
 &\leq \tilde{C}_2\cdot  \Sigma_{\tilde{s}= N_1}^{N-N_1-1} \max_{t\in \mathbb{S}^1}d((g \circ \bigcirc_{s=0}^{k-1} R_{\varphi^{\tilde{s}\cdot k+s}(p_N)}\circ g^{-1})(t),{\text{rot}}_{\alpha}(t)) \\
 &\leq \tilde{C}_1\cdot \tilde{C}_2\cdot\Sigma_{\tilde{s}= N_1}^{N-N_1-1} \Sigma_{s=0}^{k-1}d(\varphi^{\tilde{s}\cdot k+s}(p_N),\varphi^s(q))\\ 
 &= \tilde{C}_1\cdot \tilde{C}_2\cdot\Sigma_{s=kN_1}^{N\cdot k-k\cdot N_1-1} d(\varphi^s(p_N),\varphi^{res(s)}(q)). 
\end{aligned}
\end{equation}
where $res(s)$ denotes the remainder of the division of $s$ by $k$.

Given $\rho>0$, take $\eta>0$ such that 
$$2C\cdot \tilde{C}_1\cdot \tilde{C}_2 \cdot\eta^c \cdot \Sigma_{s=0}^{\infinity} 2^{-sc}<\rho,$$ 
where $c$ and $C$ are such that $ d(\Pi^{-1}(x),\Pi^{-1}(x))\leq C\cdot d(x,y)^c$ for every $x,y\in \Sigma_{\mathcal{B}}$, and consider $N$ even and $N_1\in \mathbb{N}$ large enough such that   
$$d(\sigma^{k\cdot N_1}(H_1;Q^NH_2),\bar{Q})\leq 2^{-k\cdot N_1}<\eta.$$ 
Then for $0\leq s < N k/2-kN_1$, one has $$d(\sigma^{k\cdot N_1+s}(H_1;Q^NH_2),\sigma^{res(s)}(\bar{Q}))\leq 2^{-(k\cdot N_1+s)}=2^{-k\cdot N_1}   2^{-s}$$ 
and  
$$d(\sigma^{Nk/2+s}(H_1;Q^NH_2),\sigma^{res(s)}(\bar{Q}))\leq 2^{-(Nk/2-s)}=2^{-k\cdot N_1}   2^{-(Nk/2-kN_1-s)}.$$
So, as $\Pi$ conjugates $\varphi$ and $\sigma$, we have 
\begin{eqnarray*}
  d(\varphi^{k\cdot N_1+s}(p_N),\varphi^{res(s)}(q))&=& d(\Pi^{-1}(\sigma ^{k\cdot N_1+s}(H_1;Q^NH_2)),\Pi^{-1}(\sigma^{res(s)}(\bar{Q})))\\ &\leq& C \cdot (d(\sigma^{k\cdot N_1+s}(H_1;Q^NH_2),\sigma^{res(s)}(\bar{Q}))^c\\ &\leq& C\eta^c 2^{-sc}    
\end{eqnarray*}
and 
\begin{eqnarray*}
  d(\varphi^{Nk/2+s}(p_N),\varphi^{res(s)}(q))&=& d(\Pi^{-1}(\sigma ^{Nk/2+s}(H_1;Q^NH_2)),\Pi^{-1}(\sigma^{res(s)}(\bar{Q})))\\ &\leq& C \cdot (d(\sigma^{Nk/2+s}(H_1;Q^NH_2),\sigma^{res(s)}(\bar{Q}))^c\\ &\leq& C\eta^c 2^{-(Nk/2-kN_1-s)c}.    
\end{eqnarray*}  
Now, observe that
$$\Sigma_{s=0}^{N k/2-kN_1} C\eta^c(2^{-sc}+2^{-(Nk/2-kN_1-s)c})=2C\eta^c \Sigma_{s=0}^{N k/2-kN_1} 2^{-sc}\leq 2C\eta^{c} \Sigma_{s=0}^{\infinity} 2^{-sc}$$
and then, in these conditions, by (\ref{2.4})
\begin{eqnarray*}
&&\max_{t\in \mathbb{S}^1}d((\bigcirc_{s=kN_1}^{N\cdot k-k\cdot N_1-1} R_{\varphi^s(p_N)})(t),(\bigcirc_{s=0}^{k-1} R_{\varphi^s(q)})^{N-2N_1}(t))\\ &\leq & \tilde{C}_1\cdot \tilde{C}_2\cdot\Sigma_{s=kN_1}^{N\cdot k-k\cdot N_1-1} d(\varphi^s(p_N),\varphi^{res(s)}(q))<\rho.
\end{eqnarray*}
As before, for any $n\in\mathbb{N}$, the function $(x_0,\dots, x_n,t)\rightarrow \bigcirc_{s=0}^{n} R_{x_s}(t)$ is differentiable. As we have the limits 
$$\lim _{N\rightarrow \infinity}(p_N,\dots,\varphi^{k\cdot N_1-1}(p_N))=(\Pi^{-1}(H_1;Q^{\infinity}), \dots, \varphi^{k\cdot N_1-1}(\Pi^{-1}(H_1;Q^{\infinity})))$$
and 
$$\lim _{N\rightarrow \infinity}(\varphi^{kN-kN_1}(p_N),\dots,\varphi^{N\cdot k-1+r}(p_N))=(\varphi^{-k\cdot N_1}(\Pi^{-1}(Q^{\infinity};H_2)), \dots\varphi^{r-1}(\Pi^{-1}(Q^{\infinity};H_2)))$$
then, for $N$ large, $\bigcirc_{s=0}^{kN_1-1} R_{\varphi^s(p_N)}$ and $\psi\circ \bigcirc_{s=kN-kN_1}^{N\cdot k-1+r} R_{\varphi^s(p_N)}$ are uniformly close to \\ $\bigcirc_{s=0}^{kN_1-1} R_{\varphi^s(\Pi^{-1}(H_1;Q^{\infinity}))}$ and $\psi\circ  \bigcirc_{s=-kN_1}^{r-1} R_{\varphi^s(\Pi^{-1}(Q^{\infinity};H_2))}$ respectively. Thus, as 
$$\psi\circ\bigcirc_{s=0}^{N\cdot k-1+r} R_{\varphi^s(p_N)}=\psi\circ\bigcirc_{s=kN-kN_1}^{N\cdot k-1+r} R_{\varphi^s(p_N)}\circ \bigcirc_{s=kN_1}^{N\cdot k-kN_1-1} R_{\varphi^s(p_N)}\circ\bigcirc_{s=0}^{kN_1-1} R_{\varphi^s(p_N)}$$
and 
$\bigcirc_{s=0}^{k-1}R_{ \varphi^s(q)}$ has dense orbits, we can find $N$ arbitrarily large such that 
$$d(\psi\circ \bigcirc_{s=0}^{N\cdot k-1+r} R_{\varphi^s(p_N)}(t_1),t_2)<\epsilon.$$
As we wanted to see.
\end{proof}

\begin{remark}
An irrational number $\alpha$ is said to belong to the Diophantine class $D_{\delta}$ ($\delta\geq 0$) if there exists a constant $K>0$ such that $\abs{\alpha-p/q}\geq K\cdot q^{-2-\delta}$ for any rational number $p/q$. The class $D_{\delta}$ has Lebesgue measure $1$. In \cite{her} is proved the following result: Let $T$ be a $C^{2+\epsilon}$-smooth orientation-preserving diffeomorphism of $\mathbb{S}^1$ with rotation number $\alpha\in D_{\delta}$, $0\leq \delta<\epsilon\leq 1$, $\epsilon-\delta<1$. Then the conjugacy, given by the Denjoy's Theorem for $T$ is of class $C^{1+\epsilon-\delta}$. There are many other related results that imply that the conjugacy given by the Denjoy's Theorem is a diffeomorphism, see for example \cite{Y1} and \cite{Y2}.

\end{remark}

\begin{proposition}\label{rphi1}
Given $F\in \mathcal{R}_{\varphi,\Lambda}$, if we can find a periodic point $q\in \Lambda$ of $\varphi$ with period $k\in\mathbb{N}$ such that $\tilde{x}\notin \mathcal{O}(q)$, where $M_F(\Lambda\times \mathbb{S}^1)=\{(\tilde{x},\tilde{t})\}$, the rotation number of $\bigcirc_{s=0}^{k-1} R_{\varphi^s(q)}$ is irrational and the conjugacy function given by Denjoy's theorem is a diffeomorphism, then 
there is a sequence of non-trivial intervals contained in $\mathcal{L}_{\Lambda\times \mathbb{S}^1,\Phi,F}$ converging to the maximum $F(\tilde{x},\tilde{t})$.
\end{proposition}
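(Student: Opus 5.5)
Write $\Phi^n(x,t)=(\varphi^n(x),\bigcirc_{s=0}^{n-1}R_{\varphi^s(x)}(t))$ and $m=F(\tilde{x},\tilde{t})$. Since $F\in\mathcal{R}_{\varphi,\Lambda}$ and $\frac{\partial^2F}{\partial t^2}(\tilde{x},\tilde{t})<0$, the value $F$ takes near $(\tilde{x},\tilde{t})$ is controlled along $t$ by a nondegenerate quadratic. The plan is to build, for each small $\delta>0$ in a sequence $\delta_j\to 0$, a whole interval of values $[m-\delta,m-\delta']\subset\mathcal{L}_{\Lambda\times\mathbb{S}^1,\Phi,F}$.

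The first step is to realize a given Lagrange value exactly. Choose a point $x^*\in\Lambda$ near $\tilde{x}$ and fix a fiber target $t^*$ near $\tilde{t}$; the candidate value is $F(x^*,t^*)$. Because $\tilde{x}\notin\mathcal{O}(q)$, $\sup_n F$ along $\mathcal{O}(q)\times\mathbb{S}^1$ is strictly less than $m$, say by $2\gamma$. Using transitivity, pick admissible connecting words so that the bi-infinite sequence has the form
\[
\dots Q^{N_1}H_1\,A\,H_2Q^{N_2}H_1\,A\,H_2Q^{N_3}\dots,
\]
where $A$ is a long central block around $x^*$ and $N_i\to\infty$. Using Lemma \ref{lemarota} repeatedly, with $\psi$ the fiber map along the transition $H_2\to A$ and with $\epsilon_i\to 0$, choose each $N_i$ so that the fiber coordinate on arrival at the position of $x^*$ lies within $\epsilon_i$ of $t^*$. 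The lemma applies because $\bigcirc_{s=0}^{k-1}R_{\varphi^s(q)}$ has irrational rotation number and a smooth Denjoy conjugacy. Then along the orbit $F$ at these positions converges to $F(x^*,t^*)$, and $\limsup$ realizes that value, provided all other times contribute at most $F(x^*,t^*)$ in the limit.

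The main obstacle is this last domination: $F$ at the other positions of the block $A$, on the connecting words, and at other fiber values must not exceed $F(x^*,t^*)$. To control it, take $x^*$ to be $\tilde{x}$ itself, or a point of $\Lambda$ close to it along which $\tilde{x}$-type maxima occur only once, with connectors $H_1,H_2$ whose orbit points stay away from $\tilde{x}$. Since $\max F$ on the compact set $\Lambda\times\mathbb{S}^1$ minus a small neighborhood $U$ of $(\tilde{x},\tilde{t})$ is below $m-\eta$, every value $F(x^*,t^*)$ in $(m-\eta,m]$ dominates all times whose orbit points lie outside $U$. The one remaining issue is that the $A$-block itself could pass through $U$ twice. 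This is avoided by using that $\tilde{x}$ is a corner with $DF(e^{s,u})\neq0$, so points with symbolic sequence agreeing with $\tilde{x}$ on a long window but differing afterwards have strictly smaller $F$ in the base direction. Hence only the central time of $A$ approaches $m$.

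Finally, vary $t^*$ continuously in an arc $[\tilde{t}+a,\tilde{t}+b]$ on one side of $\tilde{t}$ with $0<a<b$ small. Then $t\mapsto F(\tilde{x},t)$ is strictly monotone there, so its image is a non-trivial interval $[F(\tilde{x},\tilde{t}+b),F(\tilde{x},\tilde{t}+a)]$ contained in $(m-\eta,m)$. Each value in it is attained by the construction above with $x^*=\tilde{x}$, where a single base point suffices since only the fiber target changes. Letting $b\to0$ gives intervals in $\mathcal{L}_{\Lambda\times\mathbb{S}^1,\Phi,F}$ accumulating at $F(\tilde{x},\tilde{t})$.

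If $\tilde{x}$ is periodic, its own orbit returns near $\tilde{x}$ every period, and the domination must be checked there too. This is done by shrinking $b$ so that at the returns the fiber coordinate, governed by the other fiber maps, is not forced to exceed the target. Arranging that compatibility is where I expect extra care to be needed, and it is likely why the sequence of intervals, rather than a single interval ending at the maximum, is all that is claimed.
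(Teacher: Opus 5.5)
\textbf{Gap: the case of periodic $\tilde x$.} Your argument breaks down in the case you postpone to the end, and that case cannot be avoided. The proposition only assumes $\tilde x\notin\mathcal{O}(q)$, and a periodic corner can be the maximum point; Theorem \ref{principal1} treats the non-periodic case separately for this reason.

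Your last step claims the values $F(\tilde x,t^*)$, so the base points at the marked times must converge to $\tilde x$. With a fixed block $A$ they instead converge to the homoclinic point $\Pi^{-1}(\dots Q^{\infty}H_1AH_2Q^{\infty}\dots)$. So you need central blocks $A_i$ of growing length. Now suppose $\tilde x$ has period $p$ and $c_i$ are the marked times, and put $T=\bigcirc_{s=0}^{p-1}R_{\varphi^s(\tilde x)}$. For each fixed $j\in\mathbb{Z}$, the times $c_i+jp$ also have base points converging to $\tilde x$, with fiber coordinates converging to $T^j(t^*)$. Hence the Lagrange value of your point is at least $\sup_{j\in\mathbb{Z}}F(\tilde x,T^j(t^*))$.

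Already for $R_x={\text{rot}}_{\alpha}$, density of $\{t^*+jp\alpha\}$ makes this supremum equal to $F(\tilde x,\tilde t)$ for every $t^*$. So the whole family collapses to the single value $m$, however $a,b$ are chosen. The same happens for general $R_x$, e.g.\ when $T$ fixes $\tilde t$ and contracts near it. Shrinking $b$ therefore does not repair the argument.

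The corner condition $DF_{(\tilde x,\tilde t)}(e^{s,u}_{\tilde x})\neq 0$ does not decide which passage wins either. The base-direction advantage of the central passage is exponentially small in the window length, while the fiber coordinates at the other passages are dictated by $T$. A fixed excursion with $b$ below the base gap between its best passage and the others might work, but then the values are not $F(\tilde x,t^*)$, and you would have to prove that the best passage is strict. For non-periodic $\tilde x$ your scheme can be made to work. There, the reason other times stay away from $\tilde x$ is that a non-periodic corner lies in $W^s\cap W^u$ of periodic orbits and so is not recurrent; the sign of $DF$ plays no role.

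\textbf{The missing idea: do not prescribe which time realizes the $\limsup$.} The paper proceeds as follows.
\begin{itemize}
\item Fix a homoclinic point $z\in W^s(q)\cap W^u(q)$ close to $\tilde x$, with $\Pi(\varphi^{-j}(z))=Q^{\infty};HQ^{\infty}$. Take $\delta<d(\mathcal{O}(q),\tilde x)$ and $H$ long enough that positions outside $H$ avoid $B(\tilde x,\delta)$.
\item Let $t$ run over the arc $(\bigcirc_{s=0}^{j-1}R_{\varphi^{s-j}(z)})^{-1}(I)$. For each such $t$, the maximum of $F$ along the whole $\Phi$-orbit of $(\varphi^{-j}(z),t)$ is attained at one of finitely many positions $r$ of $H$.
\item The sets $\mathcal{A}_r$ of such $t$ are closed and cover the arc, so by Baire some $\mathcal{A}_{j_0}$ has non-empty interior.
\item On that interior, $t\mapsto F(\Phi^{j_0}(\varphi^{-j}(z),t))$ is non-constant, because the fiber map is a diffeomorphism and $\partial^2F/\partial t^2<0$ near $(\tilde x,\tilde t)$. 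Its image therefore contains a non-trivial interval.
\item Each such value is the Lagrange value of $(z_t,t)$, where $z_t=\Pi^{-1}(Q^{\infty};HQ^{n_1}HQ^{n_2}H\dots)$ and Lemma \ref{lemarota} brings the fiber back to $t$ at each copy of $H$.
\item Domination over all other times is automatic: within the excursion it holds by the definition of $\mathcal{A}_{j_0}$, and in the $Q$-blocks because $\delta<d(\mathcal{O}(q),\tilde x)$.
\item Letting $z\to\tilde x$ gives the sequence of intervals.
\end{itemize}
The values obtained are Markov values of homoclinic excursions, not $F(\tilde x,t)$. This is consistent with the statement giving only intervals accumulating at the maximum, not one ending at it.
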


\begin{proof}

Consider an admissible word $Q$ such that $q=\Pi^{-1}(\bar{Q})$. Given $\delta>0$, there exists $\epsilon_{\delta}>0$ such that if $d(x,\tilde{x})\geq \delta$ or $\abs{t-\tilde{t}}\geq \delta$, then $F(x,t)<F(\tilde{x},\tilde{t})-\epsilon_{\delta}$. Take $0<\delta<d(\mathcal{O}(q),\tilde{x})$, $I$ a small enough closed interval centered at $\tilde{t}$ and $z\in W^s(q)\cap W^u(q)$ close of $\tilde{x}$ such that for $t\in I$, $F(z,t)>F(\tilde{x},\tilde{t})-\epsilon_{\delta}$. Suppose, additionally, that $\delta$ is so small such that if $(x,t)\in B(\tilde{x},\delta)\times (\tilde{t}-\delta,\tilde{t}+\delta)$, $\frac{\partial^2 F(x,t)}{\partial t^2}<0$.  

If $j> 0$ is such that  $\Pi(\varphi^{-j}(z))=Q^{\infinity};HQ^{\infinity}$, where $QHQ$ is admissible and $H$ is large such that in the positions that are not in $H$ the point determined does not belong to $B(\tilde{x},\delta)$ ($H$ can start or finish with some powers of $Q$), then for $t\in (\bigcirc_{s=0}^{j-1} R_{\varphi^{s-j}(z)})^{-1}(I)$ one has 
$$\Phi^j(\varphi^{-j}(z),t)=(z,\bigcirc_{s=0}^{j-1} R_{\varphi^{s-j}(z)}(t))\in \{z\}\times I.$$
Then if $H=h_0\dots h_n$, for any $t\in (\bigcirc_{s=0}^{j-1} R_{\varphi^{s-j}(z)})^{-1}(I)$, there exists $i\in \{1,\dots, n\}$, 
such that $F(\Phi^i(\varphi^{-j}(z),t))\geq  
F(\Phi^l(\varphi^{-j}(z),t))$, $\forall l\in \mathbb{Z}$. Given $r\in \{1,\dots, n\}$ let 
$$\mathcal{A}_r=\{t\in (\bigcirc_{s=0}^{j-1} R_{\varphi^{s-j}(z)})^{-1}(I):F(\Phi^r(\varphi^{-j}(z),t))\geq  
F(\Phi^l(\varphi^{-j}(z),t)),\ \forall l\in \mathbb{Z}\}.$$

The sets $\mathcal{A}_r$ are closed and then, there exists $j_0\in \{1,\dots, n\}$ such that $\mathcal{A}_{j_0}$ has non-empty interior (Baire's Theorem). Note that, for $t\in \mathcal{A}_{j_0}$, $$(\varphi^{j_0-j}(z),\bigcirc_{s=0}^{j_0-1} R_{\varphi^{s-j}(z)}(t))\in B(\tilde{x},\delta)\times (\tilde{t}-\delta,\tilde{t}+\delta)$$
and 
$$\frac{dF}{dt} (\varphi^{j_0-j}(z),\bigcirc_{s=0}^{j_0-1} R_{\varphi^{s-j}(q)}(t))=\frac{\partial F}{\partial t} (\varphi^{j_0-j}(z),\bigcirc_{s=0}^{j_0-1} R_{\varphi^{s-j}(q)}(t))\cdot (\bigcirc_{s=0}^{j_0-1} R_{\varphi^{s-j}(z)})^{'}(t).$$
Then, $F(\varphi^{j_0-j}(z),\bigcirc_{s=0}^{j_0-1} R_{\varphi^{s-j}(z)}(\mathcal{A}_{j_0}))$ has non-empty interior. We will see that 
$$F(\varphi^{j_0-j}(z),\bigcirc_{s=0}^{j_0-1} R_{\varphi^{s-j}(z)}(\mathcal{A}_{j_0}))\subset \mathcal{L}_{\Lambda\times \mathbb{S}^1,\Phi,F}.$$
Given $t\in \mathcal{A}_{j_0}$, by Lemma \ref{lemarota}, we can define the point 
$$z_t=\Pi^{-1}(Q^{\infinity};HQ^{n_1}HQ^{n_2}H\dots Q^{n_{k-1}}HQ^{n_k}\dots ),$$ 
where the sequence $\{n_s\}_{s\in\mathbb{N}}$ is taken in such a way that:
\begin{itemize}
    \item $n_s\rightarrow\infinity$
    \item $\bigcirc_{\tilde{s}=0}^{(\Sigma_{i=1}^{s}n_i)\cdot \abs{Q}+s\cdot \abs{H}}R_{\varphi^{\tilde{s}}(z_t)}(t)\rightarrow t.$
\end{itemize}    
We have that
$$\ell_{\Phi,F}(z_t,t)= \limsup_{n\rightarrow\infinity} F(\varphi^n(z_t),\bigcirc_{\tilde{s}=0}^{n-1} R_{\varphi^{\tilde{s}}(z_t)}(t))=F(\varphi^{j_0-j}(z),\bigcirc_{s=0}^{j_0-1} R_{\varphi^{s-j}(z)}(t)).$$
Indeed, for $n_s$ big, in the positions corresponding to the blocks $Q$, the value of $F$ is small because $\delta<d(\mathcal{O}(q),\tilde{x})$. If $l\in \{1,\dots, n\}$ one has 
\begin{eqnarray*}
&&\lim _{s\rightarrow\infinity}\Phi^{(\Sigma_{i=1}^{s}n_i)\abs{Q}+s\abs{H}+l}(z_t,t)\\&=&\lim _{s\rightarrow\infinity}(\varphi^{(\Sigma_{i=1}^{s}n_i) \abs{Q}+s \abs{H}+l}(z_t),\bigcirc_{\tilde{s}=0}^{(\Sigma_{i=1}^{s}n_i) \abs{Q}+s \abs{H}+l-1} R_{\varphi^{\tilde{s}}(z_t)}(t))\\&=&\lim _{s\rightarrow\infinity}(\varphi^{(\Sigma_{i=1}^{s}n_i) \abs{Q}+s \abs{H}+l}(z_t),\bigcirc_{\tilde{s}=0}^{l-1}R_{\varphi^{\tilde{s}+(\Sigma_{i=1}^{s}n_i) \abs{Q}+s \abs{H}}(z_t)}\circ\bigcirc_{\tilde{s}=0}^{(\Sigma_{i=1}^{s}n_i) \abs{Q}+s \abs{H}} R_{\varphi^{\tilde{s}}(z_t)}(t))\\&=&(\varphi^{l-j}(z),\bigcirc_{\tilde{s}=0}^{l-1} R_{\varphi^{\tilde{s}-j}(z)}(t))
\end{eqnarray*}
and then, as $t\in \mathcal{A}_{j_0}$, we have that the maximum value of $F$ is attained if $l=j_0$, that shows the result.

Finally, as we can take $z$ arbitrarily close to $\tilde{x}$, then there is a sequence of intervals contained in $\mathcal{L}_{\Lambda\times \mathbb{S}^1,\Phi,F}$ converging to $F(\tilde{x},\tilde{t})$.
\end{proof}

\begin{proposition}\label{rphi2}
Let $F\in \mathcal{R}_{\varphi,\Lambda}$ and suppose $\tilde{x}$ is not periodic, where $M_F(\Lambda\times \mathbb{S}^1)=\{(\tilde{x},\tilde{t})\}$. If $\tilde{x}\in W^s(q_2)\cap W^u(q_1)$ where $q_1, q_2\in \Lambda$ are periodic points of $\varphi$ with periods $k_1$ and $k_2$, respectively, and either the rotation number of $\bigcirc_{s=0}^{k_1-1} R_{\varphi^s(q_1)}$ or the rotation number of $\bigcirc_{s=0}^{k_2-1} R_{\varphi^s(q_2)}$ is irrational with the conjugacy function given by Denjoy's theorem a diffeomorphism, we can find a non-trivial interval containing $F(\tilde{x},\tilde{t})$ that is contained in $\mathcal{L}_{\Lambda\times \mathbb{S}^1,\Phi,F}$.
\end{proposition}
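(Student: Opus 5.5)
The argument follows the proof of Proposition \ref{rphi1}, except that the orbit of $\tilde{x}$ itself plays the role of $z$. We assume the Denjoy hypothesis holds for $q_1$; the case of $q_2$ is symmetric, with the periodic block inserted on the left instead of the right. Write $\Pi(\tilde{x})=\dots Q_1Q_1 G;\dots G' Q_2Q_2\dots$, so the kneading sequence of $\tilde{x}$ is eventually $\bar{Q}_1$ in the past and eventually $\bar{Q}_2$ in the future. Since $\tilde{x}$ is not periodic, both $\mathcal{O}(q_1)$ and $\mathcal{O}(q_2)$ stay at positive distance from $\tilde{x}$. Also, $\tilde{x}$ is the unique maximum of $F$ on $\Lambda\times\mathbb{S}^1$. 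Hence, as in Proposition \ref{rphi1}, we fix $\delta>0$ smaller than $d(\mathcal{O}(q_1)\cup\mathcal{O}(q_2),\tilde{x})$ so that $\partial^2F/\partial t^2<0$ on $B(\tilde{x},\delta)\times(\tilde{t}-\delta,\tilde{t}+\delta)$. Because the orbit of $\tilde{x}$ visits $B(\tilde{x},\delta)$ only at time $0$ and accumulates only on the two periodic orbits, we have $\sup_{n\neq 0}F(\varphi^n(\tilde{x}),t_n)\le F(\tilde{x},\tilde{t})-\epsilon_\delta$ for every choice of fibre points $t_n$.

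The candidate interval is $J=F(\{\tilde{x}\}\times I)$, where $I$ is a small closed arc around $\tilde{t}$. Since $\tilde{t}$ is a nondegenerate maximum of $F(\tilde{x},\cdot)$, $J$ is a nontrivial interval $[F(\tilde{x},\tilde{t})-\eta,F(\tilde{x},\tilde{t})]$ containing the maximum. We take $I$ small enough that every value in $J$ exceeds $F(\tilde{x},\tilde{t})-\epsilon_\delta/2$. Now fix $u\in I$. We build points by splicing. Choose a long central block $H$ of $\Pi(\tilde{x})$ containing position $0$, beginning and ending with some copies of $Q_1$ and $Q_2$ respectively. Choose also an admissible connecting word $W$ from $Q_2$ back to $Q_1$, which exists by transitivity. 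Then set
$$w=\Pi^{-1}(\bar{Q}_1;\,H\,Q_2^{m_1}W\,Q_1^{n_1}H\,Q_2^{m_2}W\,Q_1^{n_2}H\dots),$$
with $m_s,n_s\to\infty$. Once $H$ is long and $m_s,n_s$ are large, every position outside the centres of the blocks $H$ lies far from $\tilde{x}$ or near $\mathcal{O}(q_i)$. There $F$ is at most $F(\tilde{x},\tilde{t})-\epsilon_\delta/2$ whatever the fibre coordinate is. For $W$ we only need a uniform bound below the max, which we get by choosing $W$ to avoid $B(\tilde{x},\delta)$; we can shrink $\delta$ first and take $W$ inside a subhorseshoe missing $\tilde{x}$.

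The key step is steering the fibre coordinate. Let $\tau_s$ be the time at which the $s$-th block $H$ passes through its centre, which corresponds to $\tilde{x}$. We need the fibre coordinate at time $\tau_s$ to converge to $u$. We choose the $n_s$ inductively using Lemma \ref{lemarota}, applied with the periodic point $q_1$, with $H_1$ equal to the already-fixed past, with $H_2=H\dots$, and with $\psi$ the composition of the $R$'s along the initial segment of $H$ up to the centre. Given $m_s$ and everything before it, the lemma yields arbitrarily large $n_s$ with fibre coordinate at time $\tau_s$ within $\epsilon_s\to0$ of $u$.

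One subtlety is that the lemma is stated for a fixed right tail $H_2$, while our tail is still being constructed. By continuity of finite compositions, the fibre coordinate at $\tau_s$ depends only on symbols up to a bounded distance past $\tau_s$, up to an error that goes to zero as later choices grow. This is handled as in the proof of Lemma \ref{lemarota}: the convergence $\varphi^{\cdot}(p_N)\to$ limit orbit is used for a finite stretch and then perturbed, so later choices change the earlier fibre coordinates by arbitrarily little. With this, $\Phi^{\tau_s}(w,t_0)\to(\tilde{x},u)$, and therefore $\ell_{\Phi,F}(w,t_0)=\max\bigl(F(\tilde{x},u),\,F(\tilde{x},\tilde{t})-\epsilon_\delta/2\bigr)=F(\tilde{x},u)$. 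Thus $J\subset\mathcal{L}_{\Lambda\times\mathbb{S}^1,\Phi,F}$, and its right endpoint is $F(\tilde{x},\tilde{t})$.

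The main obstacle is controlling the positions near the centre of each $H$ other than the centre itself. Those points are close to $\varphi^{i}(\tilde{x})$ for $0<|i|\le|H|/2$, and the bound there rests on $\tilde{x}$ being the unique maximum together with non-periodicity. This is what makes the interval reach $F(\tilde{x},\tilde{t})$ rather than only accumulate on it. The inductive choice of $n_s$ via Lemma \ref{lemarota}, with uniform error control, is routine but needs care.
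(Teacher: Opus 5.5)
Your proposal is correct and follows the paper's proof essentially step by step. You splice copies of a long central block of $\Pi(\tilde{x})$ between long $Q_2$- and $Q_1$-blocks joined by a connecting word, then vary the exponent of the periodic point whose return map has irrational rotation number and smooth Denjoy conjugacy, via Lemma \ref{lemarota}, so that the fibre coordinate at each passage through $\tilde{x}$ converges to a prescribed $u\in I$, giving $F(\{\tilde{x}\}\times I)\subset\mathcal{L}_{\Lambda\times\mathbb{S}^1,\Phi,F}$. Your inductive control of how later choices perturb earlier fibre coordinates is more careful than the paper, which simply asserts the exponents can be chosen. The one thing to tidy is the order of choices: fix $W$ first with $\tilde{x}\notin\mathcal{O}(\Pi^{-1}(\bar{Q}_2W\bar{Q}_1))$, as the paper does with its word $Q$, and only then take $\delta$ below the distance from $\tilde{x}$ to that orbit and to $\{\varphi^n(\tilde{x})\}_{n\neq0}$.
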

\begin{proof}

In this case, given $\delta>0$ there exists $\epsilon_{\delta}>0$ such that $d(x,\tilde{x})\geq \delta$ implies $F(x,t)<F(\tilde{x},\tilde{t})-\epsilon_{\delta}$. Take an interval $I$ centered at $\tilde{t}$ such that for $t\in I$, $F(\tilde{x},t)>F(\tilde{x},\tilde{t})-\epsilon_{\delta}$. If $q_1=\Pi^{-1}(\bar{X_1})$ and $q_2=\Pi^{-1}(\bar{X_2})$, let $Q$ be a finite word such that $X_2QX_1$ is admissible (where we can set $Q=\emptyset$ if $X_1=X_2$). As $\tilde{x}$ is not periodic then we can take 
$$0<\delta<\min \{d(\tilde{x},\{\varphi ^j(\tilde{x}):j\neq 0 \}),d(\tilde{x},\mathcal{O}(\Pi^{-1}(\bar{X}_2Q\bar{X}_1))) \}$$
(note that $\tilde{x}\notin \mathcal{O}(\bar{X}_2Q\bar{X}_1 )$). Take $j> 0$ and $H$ such that $X_1HX_2$ is admissible and $\Pi(\varphi^{-j}(\tilde{x}))=X_1^{\infinity};HX_2^{\infinity}$. By Lemma \ref{lemarota}, for $t\in I$ we can define the point 
$$\tilde{x}_t=\Pi^{-1}(X_1^{\infinity};HX_2^{n_{1,1}}QX_1^{n_{2,1}}HX_2^{n_{1,2}}QX_1^{n_{2,2}}H\dots),$$ 
where the sequences $\{n_{1,s}\}_{s\in\mathbb{N}},\{n_{2,s}\}_{s\in\mathbb{N}}$ are taken in such a way that:
\begin{itemize}
    \item $n_{1,s}\rightarrow\infinity$, $n_{2,s}\rightarrow\infinity$
    \item $\bigcirc_{\tilde{s}=0}^{(\Sigma_{i=1}^{s}n_{1,i})\cdot\abs{X_2}+(\Sigma_{i=1}^{s}n_{2,i})\cdot\abs{X_1}+s\cdot(\abs{Q}+\abs{H})}R_{\varphi^{\tilde{s}}(\tilde{x}_t)}(t)\rightarrow (\bigcirc_{\tilde{s}=0}^{j-1} R_{\varphi^{\tilde{s}-j}(\tilde{x})})^{-1}(t).$
\end{itemize} 
If $\bigcirc_{s=0}^{k_1-1} R_{\varphi^s(q_1)}$ has irrational rotation number, we can take $n_{1,s}=s$ and if $\bigcirc_{s=0}^{k_2-1} R_{\varphi^s(q_2)}$ has irrational rotation number, we can take $n_{2,s}=s$. With this conditions, we have that
$$\ell_{\Phi,F}(\tilde{x}_t,t)= \limsup_{n\rightarrow\infinity} F(\varphi^n(\tilde{x}_t),\bigcirc_{\tilde{s}=0}^{n-1} R_{\varphi^{\tilde{s}}(\tilde{x}_t)}(t))=F(\tilde{x},t)$$
since for $n_{1,s},n_{2,s}$ big, the value of $F$ in the positions corresponding to any position different to the position $j$ of $H$ (here the angle in $\mathbb{S}^1$ is not important) gives value less than $F(\tilde{x},\tilde{t})-\epsilon_{\delta}$ and in the positions corresponding to the position $j$ of $H$ the value of $F$ is converging to $F(\tilde{x},t)$:
\begin{eqnarray*}
&&\lim _{s\rightarrow\infinity}F(\Phi^{j+(\Sigma_{i=1}^{s}n_{1,i})\cdot\abs{X_2}+(\Sigma_{i=1}^{s}n_{2,i})\cdot\abs{X_1}+s\cdot(\abs{Q}+\abs{H})}(\tilde{x}_t,t))\\&=&\lim _{s\rightarrow\infinity}F(\varphi^j(\varphi^{-j}(\tilde{x})),\bigcirc_{\tilde{s}=0}^{j+(\Sigma_{i=1}^{s}n_{1,i})\cdot\abs{X_2}+(\Sigma_{i=1}^{s}n_{2,i})\cdot\abs{X_1}+s\cdot(\abs{Q}+\abs{H})} R_{\varphi^{\tilde{s}}(\tilde{x}_t)}(t))\\&=&F(\tilde{x},\bigcirc_{\tilde{s}=0}^{j-1}R_{\varphi^{\tilde{s}-j}(\tilde{x})}((\bigcirc_{\tilde{s}=0}^{j-1} R_{\varphi^{\tilde{s}-j}(\tilde{x})})^{-1}(t)))\\&=&F(\tilde{x},t)
\end{eqnarray*}
As $F(\tilde{x},I)=\{F(\tilde{x},t):t\in I \}$ is a non-trivial interval, this finishes the proof of the proposition (here we used again that the point of maximum is unique and that the function $t\rightarrow F(\tilde{x},t)$ is continuous).

\end{proof}

At this point, we are ready to prove Theorem \ref{principal1}. 

\begin{proof}
The proof is a direct consequence of the previous propositions: in this case for every $x\in S$, $R_x={\text{rot}}_{\alpha}$, and then given a periodic point $p\in\Lambda$ of $\varphi$ of period $k\in\mathbb{N}$, the rotation number of $\bigcirc_{s=0}^{k-1} R_{\varphi^s(q)}=\bigcirc_{s=0}^{k-1} {\text{rot}}_{\alpha}={\text{rot}}_{k\cdot \alpha}$ is irrational and the conjugacy is given by $g=\text{id}_{\mathbb{S}^1}$, the identity function  of $\mathbb{S}^1$ which is a diffeomorphism. Then, if $F\in \mathcal{R}_{\varphi,\Lambda}$ and $M_F(\Lambda\times \mathbb{S}^1)=\{(\tilde{x},\tilde{t})\}$, to see that there is a sequence of non-trivial intervals contained in $\mathcal{L}_{\Lambda\times \mathbb{S}^1,\varphi\times R_{\alpha},F}$ converging to $F(\tilde{x},\tilde{t})$, we can take any periodic point $q\in \Lambda$ of $\varphi$ such that $\tilde{x}\notin \mathcal{O}(q)$. If $\tilde{x}$ is not periodic, as $\tilde{x}$ is a corner of rectangle of $\Lambda$, we can find $q_1, q_2\in \Lambda$ periodic points of $\varphi$ such that $\tilde{x}\in W^s(q_2)\cap W^u(q_1)$. Then, we can consider any of them and conclude that a non-trivial interval containing $F(\tilde{x},\tilde{t})$ is contained in $\mathcal{L}_{\Lambda\times \mathbb{S}^1,\varphi\times R_{\alpha},F}$.
\end{proof}

\begin{corollary}\label{where}
Let $\Lambda$ be a horseshoe of $\varphi:S\rightarrow S$ with $HD(\Lambda)<1$ and $\alpha\in \mathbb{R}\setminus \mathbb{Q}$ be fixed. If $\varphi\times {\text{rot}}_{\alpha}:S\times \mathbb{S}^1\rightarrow S\times \mathbb{S}^1$ is given by $(\varphi\times {\text{rot}}_{\alpha})(x,t)=(\varphi(x),{\text{rot}}_{\alpha}(t))$, then for each $F\in \mathcal{R}_{\varphi,\Lambda}$:
 $$\mathcal{L}_{\Lambda\times \mathbb{S}^1,\varphi\times {\text{rot}}_{\alpha},F}\neq\mathcal{L}_{\Lambda,\varphi,f_F} \ \ \text{and} \ \ \mathcal{M}_{\Lambda\times \mathbb{S}^1,\varphi\times {\text{rot}}_{\alpha},F}\neq\mathcal{M}_{\Lambda,\varphi,f_F}.$$ 
\end{corollary}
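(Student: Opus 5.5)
The approach is to compare sizes: by Theorem \ref{principal1} the spectra for the skew product have nonempty interior, while the spectra of $(\Lambda,\varphi,f_F)$ have small dimension. Fix $F\in\mathcal{R}_{\varphi,\Lambda}$. By Theorem \ref{principal1}, or directly by Proposition \ref{rphi1} applied to any periodic point $q\in\Lambda$ with $\tilde{x}\notin\mathcal{O}(q)$ (with $R_x={\text{rot}}_{\alpha}$, so the conjugacy is the identity), the set $\mathcal{L}_{\Lambda\times \mathbb{S}^1,\varphi\times {\text{rot}}_{\alpha},F}$ contains a non-trivial interval. Since $\mathcal{L}\subset\mathcal{M}$ for any triple, the Markov spectrum $\mathcal{M}_{\Lambda\times \mathbb{S}^1,\varphi\times {\text{rot}}_{\alpha},F}$ contains the same interval. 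Hence both spectra of the skew product have Hausdorff dimension $1$.

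It then suffices to show that $HD(\mathcal{M}_{\Lambda,\varphi,f_F})<1$, since $\mathcal{L}_{\Lambda,\varphi,f_F}\subset\mathcal{M}_{\Lambda,\varphi,f_F}$. Every Markov value $m_{\varphi,f_F}(x)=\sup_n f_F(\varphi^n(x))$ is a supremum over the compact set $\overline{\mathcal{O}(x)}\subset\Lambda$, so it is attained at a point of $\Lambda$. Thus $\mathcal{M}_{\Lambda,\varphi,f_F}\subset f_F(\Lambda)$. We showed above that $f_F$ is Lipschitz on $S$, with constant $c$ coming from the derivative of $F$; Proposition \ref{differentiable} is not needed for this. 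Lipschitz maps do not increase Hausdorff dimension, so $HD(f_F(\Lambda))\le HD(\Lambda)<1$.

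Consequently $f_F(\Lambda)$ has zero Lebesgue measure and contains no interval, so it cannot contain the interval found in the first step. This gives points of $\mathcal{L}_{\Lambda\times\mathbb{S}^1,\varphi\times{\text{rot}}_\alpha,F}$ outside $\mathcal{L}_{\Lambda,\varphi,f_F}$, and points of $\mathcal{M}_{\Lambda\times\mathbb{S}^1,\varphi\times{\text{rot}}_\alpha,F}$ outside $\mathcal{M}_{\Lambda,\varphi,f_F}$, which proves both inequalities.

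No step here should be a serious obstacle. The only care needed is to remember that $\mathcal{M}$ and $\mathcal{L}$ are images of $\Lambda$ only through sup and limsup values, which is why the argument uses the inclusion $\mathcal{M}_{\Lambda,\varphi,f_F}\subset f_F(\Lambda)$ rather than a direct dimension bound on the spectra. All the real work is already contained in Proposition \ref{rphi1}.
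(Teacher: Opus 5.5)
Your proposal is correct and is essentially the paper's argument. The skew-product Lagrange spectrum has nonempty interior, hence so does its Markov spectrum since $\mathcal{L}\subset\mathcal{M}$. On the surface side, $HD\le HD(f_F(\Lambda))\le HD(\Lambda)<1$ because $f_F$ is Lipschitz. The differences are cosmetic: you bound $\mathcal{M}_{\Lambda,\varphi,f_F}$ first and get the Lagrange case by inclusion, and you explicitly justify $\mathcal{M}_{\Lambda,\varphi,f_F}\subset f_F(\Lambda)$, which the paper leaves implicit.
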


\begin{proof}
In this case, as $f_F$ is Lipschitz, one has 
$$HD(\mathcal{L}_{\Lambda,\varphi,f_F})\leq HD(f_F(\Lambda))\leq HD(\Lambda)<1$$ 
and as $int( \mathcal{L}_{\Lambda\times \mathbb{S}^1,\varphi\times {\text{rot}}_{\alpha},F})\neq \emptyset$, we have $HD(\mathcal{L}_{\Lambda\times \mathbb{S}^1,\varphi\times {\text{rot}}  _{\alpha},F})=1$. The argument for the Markov spectrum is similar once $\mathcal{L}_{\Lambda\times \mathbb{S}^1,\varphi\times {\text{rot}}_{\alpha},F}\subset \mathcal{M}_{\Lambda\times \mathbb{S}^1,\varphi\times {\text{rot}}_{\alpha},F}.$
\end{proof}

\subsection{Theorem on the intersection of the Lagrange spectrum with half-lines}  Let $\varphi_0$ be a smooth diffeomorphism of a surface $S$ possessing a horseshoe $\Lambda_0$. Denote by $\mathcal{U}$ a  $C^{2}$-neighborhood of $\varphi_0$ in the space $\textrm{Diff}^{2}(S)$ of smooth diffeomorphisms of $S$ such that $\Lambda_0$ admits a continuation $\Lambda$ for every $\varphi\in\mathcal{U}$. 

\begin{lemma}\label{discontinuities}
If $\mathcal{U}\subset\textrm{Diff}^{2}(S)$ is sufficiently small, there exists a residual subset $\mathcal{U}^*\subset \mathcal{U}$ such that given $r\geq2$ and $\varphi\in\mathcal{U}^*$ there exits a $C^r$-residual set $\mathcal{P}_{\varphi,\Lambda}\subset C^r(S,\mathbb{R})$ with the following properties:
\begin{itemize}
    \item Given $\varphi\in \mathcal{U}^*$, for each subhorseshoe $\tilde{\Lambda}\subset \Lambda$ and $f\in \mathcal{P}_{\varphi,\Lambda}$
    $$\min \{1, HD(\tilde{\Lambda}) \}=HD(\ell_{\varphi,f}(\tilde{\Lambda})).$$
    \item Given $\varphi\in \mathcal{U}^*$ and $f\in \mathcal{P}_{\varphi,\Lambda}$, then 
    $$\mathcal{L}^{'} _{\Lambda,\varphi, f}=\{x: x\ \textrm{is an accumulation point of}\ \mathcal{L}_{\Lambda,\varphi, f}\}\neq \emptyset$$ and if $c_{\varphi,f}=\min \mathcal{L}^{'} _{\Lambda,\varphi, f}$, we have 
    $$c_{\varphi,f}=\max \{t\in \mathbb{R}:L_{\Lambda,\varphi,f}(t)=0 \}=\max \{ t\in \mathbb{R}:HD(\Lambda_t)=0\},$$
    where for $t\in \mathbb{R}$, $L_{\Lambda,\varphi,f}(t)=HD(\mathcal{L}_{\Lambda,\varphi,f}\cap(-\infinity,t))$ and $\Lambda_t=\{x\in \Lambda: m_{\varphi,f}(x)\leq t\}$.
    \item Given $\varphi\in \mathcal{U}^*$, $f\in \mathcal{P}_{\varphi,\Lambda}$ and $\epsilon>0$, there exists a non-trivial subhorseshoe $\tilde{\Lambda}\subset \Lambda$ such that $\max f|_{\tilde{\Lambda}}<c_{\varphi,f}+\epsilon.$
    \item Given $\varphi\in \mathcal{U}^*$, if $\tilde{\Lambda}\subset \Lambda$ is a subhorseshoe and $f\in \mathcal{P}_{\varphi,\Lambda}$, then there exists a unique point $\tilde{x}\in \tilde{\Lambda}$ such that $\max f|_{\tilde{\Lambda}}=f(\tilde{x})$ and $Df_{\tilde{x}}(e^{s,u}_{\tilde{x}})\neq 0$.   
\end{itemize}
    
\end{lemma}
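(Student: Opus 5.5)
This lemma collects results on dynamical spectra of horseshoes in dimension two, so I would not reprove anything from scratch. I would take $\mathcal{U}^*$ and $\mathcal{P}_{\varphi,\Lambda}$ to be intersections of the residual sets produced in the literature cited in the introduction. Shrinking $\mathcal{U}$, so that the continuation $\Lambda$ and its Markov partition are defined for every $\varphi\in\mathcal{U}$, each item becomes a genericity statement. A finite (or countable) intersection of residual sets is residual, so it suffices to produce one residual set per item and intersect them.

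\textbf{First item.} The dimension formula $HD(\ell_{\varphi,f}(\tilde\Lambda))=\min\{1,HD(\tilde\Lambda)\}$ for all subhorseshoes is the generic statement from \cite{MR2}, as extended in \cite{CMM16} and \cite{GCD}. There one uses a residual set of $\varphi$, for which the stable and unstable Cantor sets satisfy the Marstrand-type transversality conditions, and a residual set of $f$, for which the projection $f$ is "generic" on every subhorseshoe. The only issue is uniformity over the countably many subhorseshoes. I would handle it by noting that subhorseshoes are determined by finite combinatorial data (subshifts of finite type inside $\Sigma_{\mathcal{B}}$), so there are countably many, and intersecting the corresponding residual sets.

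\textbf{Fourth item.} The unique maximum with $Df_{\tilde x}(e^{s,u}_{\tilde x})\neq 0$ on each subhorseshoe is the one-dimensional analogue of Lemmas \ref{open} and \ref{denso}. The same argument with no circle variable gives openness and density for a fixed subhorseshoe: Morse perturbation, a bump to isolate the maximum, and the linear perturbation $-\langle v,\varphi(x)\rangle$ chosen off the null set $\theta^s(\cdot)$. Intersecting over the countably many subhorseshoes gives a residual set.

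\textbf{Second and third items.} These follow the structure of \cite{M3} and \cite{GCD}. I would use the dimension function $t\mapsto HD(\Lambda_t)$ and the decomposition $\Lambda_t\approx\bigcup$ of subhorseshoes contained in $\{m\le t+\epsilon\}$, a fact proven in \cite{GCD} for generic $(\varphi,f)$. Then:
\begin{itemize}
\item Set $c=\max\{t:HD(\Lambda_t)=0\}$. For $t>c$, $\Lambda_t$ has positive dimension, so it contains a non-trivial subhorseshoe $\tilde\Lambda$ with $\max f|_{\tilde\Lambda}<t$. Taking $t=c+\epsilon$ gives the third item.
\item Applying the first item to $\tilde\Lambda$, $\ell(\tilde\Lambda)\subset\mathcal{L}\cap(-\infty,c+\epsilon)$ has positive dimension. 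Hence $L_{\Lambda,\varphi,f}(t)>0$ for $t>c$, and it follows that $\mathcal{L}'\neq\emptyset$ and $\min\mathcal{L}'\le c$.
\item For $t<c$, $HD(\Lambda_t)=0$. I would argue, as in \cite{M3}, that a zero-dimensional $\Lambda_t$ carries only countably many orbit types (eventually periodic sequences). Thus $\mathcal{L}\cap(-\infty,t)$ is countable with no accumulation, and $\min\mathcal{L}'\ge c$, $L(t)=0$.
\end{itemize}

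\textbf{Main obstacle.} The hard step is this last claim: zero dimension of $\Lambda_t$ forces $\mathcal{L}\cap(-\infty,t)$ to be a discrete set. That step does not follow formally from the first item, so I would import it from \cite{GCD} (or the \cite{M3} argument adapted to horseshoes) rather than redo it.
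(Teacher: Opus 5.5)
The paper gives no argument for this lemma: its proof is the single citation of Section~3 of \cite{C1}, where all four items are obtained together for generic $\varphi\in\mathcal{U}$ (no conservativity assumed) and generic $f$. Your assembly of items 1 and 4 is a reasonable reconstruction: there are countably many subhorseshoes, each given by SFT data, so you can take a countable intersection of residual sets, and item 4 is the one-variable analogue of Lemmas \ref{open} and \ref{denso}. Your derivations of item 3, of $L_{\Lambda,\varphi,f}(t)>0$ for $t>c$, and of $\min\mathcal{L}^{'}_{\Lambda,\varphi,f}\le c$ are correct. The genuine gap is the reverse inequality $\min\mathcal{L}^{'}_{\Lambda,\varphi,f}\ge c$, i.e.\ that $\mathcal{L}_{\Lambda,\varphi,f}$ has no accumulation point below $c$. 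This is the real content of the second item, and you acknowledge it is the hard step.

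The mechanism you sketch for it does not work. $HD(\Lambda_t)=0$ only says that $\Lambda_t$ has zero topological entropy. Zero-entropy invariant subsets of a horseshoe need be neither countable nor made of eventually periodic codings: balanced (Sturmian) sequences coded through two admissible blocks of $\Sigma_{\mathcal{B}}$ give a zero-entropy, hence zero-dimensional, invariant set containing uncountably many minimal sets. Every minimal $K\subset\Lambda_t$ contributes the Lagrange value $\max f|_K\le t$. So nothing in your argument prevents $\mathcal{L}_{\Lambda,\varphi,f}\cap(-\infty,t)$ from being uncountable, and in any case countability would not rule out accumulation points. Discreteness below $c$ is therefore not a formal consequence of $HD(\Lambda_t)=0$ and must come from the genericity of $f$. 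It is not what \cite{CMM16,GCD} establish (they prove continuity of dimension functions, for conservative $\varphi$). In \cite{M3} the corresponding classical fact is Markov's theorem, which has no counterpart for general horseshoes.

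By contrast, the half $L_{\Lambda,\varphi,f}(t)=0$ for $t\le c$ needs no countability. If $\ell_{\varphi,f}(x)=s$, then $s=f(y)$ for some $y\in\omega(x)\subset\Lambda_s$. Hence $\mathcal{L}_{\Lambda,\varphi,f}\cap(-\infty,t)\subset f(\Lambda_t)$, which has dimension $0$ since $f$ is Lipschitz. Finally, defining $c$ as a maximum presupposes $HD(\Lambda_c)=0$ at the threshold, which you assume rather than justify.
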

\begin{proof}
These results are in section $3$ of \cite{C1}.
\end{proof}

\begin{lemma}\label{Cr-1}
The set $R$ of the $g\in C^r(\mathbb{S}^1,\mathbb{R})$ such that there is a unique point $\tilde{t}\in \mathbb{S}^1$ such that $\max g=g(\tilde{t})$ and $\frac{d^2}{dt^2}g(\tilde{t})<0$, is open and dense.
\end{lemma}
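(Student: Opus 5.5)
Openness and density are handled separately, in the same spirit as Lemma \ref{open} and Lemma \ref{denso}, but now in one dimension, which makes everything simpler.

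\emph{Openness.} Take $g\in R$ with maximum point $\tilde{t}$ and $g''(\tilde{t})<0$. Choose a closed arc $I$ centered at $\tilde{t}$ on which $g''<-c<0$. By compactness,
$$\eta:=g(\tilde{t})-\max_{\mathbb{S}^1\setminus \mathrm{int}\, I} g>0 .$$
Let $h$ be $C^r$-close to $g$ (in particular $C^2$-close), with $\|h-g\|_{C^0}<\eta/3$ and $h''<-c/2$ on $I$. Then
\begin{itemize}
\item every maximum point of $h$ lies in $\mathrm{int}\, I$, since outside $I$ we have $h<g(\tilde{t})-2\eta/3<h(\tilde{t})$;
\item on $I$ the function $h$ is strictly concave, so it has at most one critical point there.
\end{itemize}
Two distinct maximum points in $\mathrm{int}\, I$ would both be zeros of $h'$, which the mean value theorem applied to $h'$ rules out, exactly as in Lemma \ref{open}. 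Hence $h$ has a unique maximum point $t_h$, and $h''(t_h)<-c/2<0$, so $h\in R$.

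\emph{Density.} Given $g\in C^r(\mathbb{S}^1,\mathbb{R})$ and a $C^r$-neighborhood of it, pick any maximum point $t_0$ of $g$. Fix a smooth bump function $\beta\ge 0$ supported in a small arc around $t_0$, with $\beta(t)=1-(t-t_0)^2$ near $t_0$ and $\beta(t)<1$ for $t\neq t_0$. For small $\epsilon>0$ set
$$g_\epsilon=g+\epsilon\beta .$$
Then $g_\epsilon$ is $C^r$-close to $g$. Its maximum is attained only at $t_0$: indeed $g_\epsilon(t_0)=\max g+\epsilon$, while for $t\neq t_0$ we have $g(t)\le \max g$ and $\epsilon\beta(t)<\epsilon$.

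It remains to secure $g_\epsilon''(t_0)<0$. We have
$$g_\epsilon''(t_0)=g''(t_0)-2\epsilon ,$$
and since $t_0$ is a maximum point, $g''(t_0)\le 0$. Therefore $g_\epsilon''(t_0)\le -2\epsilon<0$, so $g_\epsilon\in R$. This mirrors the perturbations $F_\epsilon$ and $F_a$ in Lemma \ref{denso}, with the two steps merged into one.

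The only delicate point is the density step, namely making the perturbation create uniqueness and nondegeneracy simultaneously. The choice of $\beta$ with a strict maximum at $t_0$ and a quadratic profile there does both at once, so I expect no real obstacle.
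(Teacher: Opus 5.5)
Your proposal is correct and follows essentially the paper's argument. Openness comes from localizing the maximum near $\tilde{t}$ and applying strict concavity (Rolle/mean value theorem), and density from a small bump perturbation that creates a unique maximum with a quadratic profile. The only difference is that you merge the paper's two successive perturbations (first forcing a unique maximum, then adding $-a(t-\tilde{t})^2$) into the single perturbation $\epsilon\beta$, which works because $g''(t_0)\le 0$ at any maximum point.
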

\begin{proof}
The proof uses similar ideas as the proofs of Proposition \ref{Cr-1}, Lemma \ref{denso} and Lemma \ref{open}: for $g$ close enough to $g_0\in R$, the points of maximum are close to the points of maximum of $g_0$ and we have exactly one because of Rolle's theorem and because the second derivative is negative. The set is dense because we can always add to any $g\in C^r(\mathbb{S}^1,\mathbb{R})$ a small perturbation close to $0$ in order to have a unique point of maximum $\tilde{t}$ and then do another perturbation by adding a non-positive small perturbation of the function $0$ that close to $\tilde{t}$ takes the form $-a(t-\tilde{t})^2$ where $a>0$ is small. 
  
\end{proof}

Using the sets of the previous lemmas, we have the following result

\begin{theorem}\label{principal0} 
Fix $\varphi\in\mathcal{U}^*$. Given $(f,g)\in \mathcal{P}_{\varphi,\Lambda}\times R$, define the function $F_{fg}:S\times \mathbb{S}^1\rightarrow \mathbb{R}$ for $(x,t)\in S\times \mathbb{S}^1$ by $F_{fg}(x,t)=(f(x)-c_{\varphi,f})\cdot (g(t)-\min g)$. Then, for 
$\alpha \in \mathbb{R}\setminus \mathbb{Q}$, the function $L_{\Lambda\times \mathbb{S}^1,\varphi\times {\text{rot}}_{\alpha},F_{fg}}(s)=HD(\mathcal{L}_{\Lambda\times \mathbb{S}^1,\varphi\times {\text{rot}}_{\alpha},F_{fg}}\cap (-\infinity,s))$ is given by
\begin{equation*}
L_{\Lambda\times \mathbb{S}^1,\varphi\times {\text{rot}}_{\alpha},F_{fg}}(s)=
    \begin{cases}
        0 & \text{if } s\leq 0\\
        1 & \text{if } s>0.
    \end{cases}
\end{equation*}
In fact, $\mathcal{L}_{\Lambda\times \mathbb{S}^1,\varphi\times {\text{rot}}_{\alpha},F_{fg}}\cap (-\infinity,s)=\emptyset$ if $s\leq 0$ and $\mathcal{L}_{\Lambda\times \mathbb{S}^1,\varphi\times {\text{rot}}_{\alpha},F_{fg}}\cap (-\infinity,s)$ has non-empty interior if $s>0$.
\end{theorem}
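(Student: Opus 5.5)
The plan has two halves. First, every Lagrange value of $F_{fg}$ is $\geq 0$, which gives emptiness for $s\leq 0$. Second, for $s>0$, a subhorseshoe with small maximum of $f$ yields intervals of Lagrange values inside $(0,s)$, via Proposition \ref{rphi1}. Throughout, write $c=c_{\varphi,f}$ and $F=F_{fg}$.

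\emph{Step 1: $\ell_{\varphi\times \text{rot}_\alpha,F}(x,t)\geq 0$ for every $(x,t)\in\Lambda\times\mathbb{S}^1$.} Fix $\eta>0$ and a point $t_0$ with $g(t_0)=\min g$. Let $J$ be the arc of radius $\eta'$ around $t_0$, with $\eta'$ chosen so that $g-\min g<\eta$ on $J$.
\begin{itemize}
\item Since $\alpha$ is irrational, $\text{rot}_\alpha^n(t)\in J$ for infinitely many $n\geq 1$.
\item At each such $n$ we have $|F(\varphi^n(x),\text{rot}_\alpha^n(t))|\leq \|f-c\|_{\infty}\,\eta$.
\item Hence $\limsup_n F(\Phi^n(x,t))\geq -\|f-c\|_\infty\,\eta$.
\end{itemize}
Letting $\eta\to 0$ gives $\ell\geq 0$. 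Therefore $\mathcal{L}_{\Lambda\times\mathbb{S}^1,\varphi\times\text{rot}_\alpha,F}\cap(-\infty,s)=\emptyset$ for $s\leq 0$, so the dimension is $0$ there. No property of $c$ is needed for this step.

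\emph{Step 2: choosing a subhorseshoe, for fixed $s>0$.} By the third item of Lemma \ref{discontinuities}, pick a nontrivial subhorseshoe $\tilde\Lambda\subset\Lambda$ with $\max f|_{\tilde\Lambda}<c+\epsilon$. Here $\epsilon$ is chosen so that $\epsilon\,(\max g-\min g)<s$. By the fourth item, $f|_{\tilde\Lambda}$ has a unique maximum point $\tilde x$, and $Df_{\tilde x}(e^{s,u}_{\tilde x})\neq 0$.

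I next claim $f(\tilde x)>c$.
\begin{itemize}
\item Every $x\in\tilde\Lambda$ satisfies $m_{\varphi,f}(x)\leq f(\tilde x)$, so $\tilde\Lambda\subset\Lambda_{f(\tilde x)}$.
\item $HD(\tilde\Lambda)>0$ because $\tilde\Lambda$ is a nontrivial horseshoe.
\item $c$ is the largest $t$ with $HD(\Lambda_t)=0$, and $t\mapsto HD(\Lambda_t)$ is monotone, so $HD(\Lambda_t)=0$ for all $t\leq c$.
\item Hence $f(\tilde x)\leq c$ is impossible, and $f(\tilde x)>c$.
\end{itemize}
This is the step I expect to need the most care, since it uses precisely the characterization of $c$ in Lemma \ref{discontinuities}.

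\emph{Step 3: $F$ lies in $\mathcal{R}_{\varphi,\tilde\Lambda}$.} Let $\tilde t$ be the unique maximum point of $g$, which exists by Lemma \ref{Cr-1}. On $\tilde\Lambda\times\mathbb{S}^1$ the function $F$ is a product of two factors.
\begin{itemize}
\item The factor $f-c$ is positive exactly near the top of $f$ on $\tilde\Lambda$, and $g-\min g\geq 0$.
\item So the maximum of $F$ is attained only at $(\tilde x,\tilde t)$, with value $(f(\tilde x)-c)(\max g-\min g)\in(0,s)$.
\item The derivatives are nondegenerate: $DF_{(\tilde x,\tilde t)}(e^{s,u}_{\tilde x})=(\max g-\min g)\,Df_{\tilde x}(e^{s,u}_{\tilde x})\neq0$, and $\partial_t^2F(\tilde x,\tilde t)=(f(\tilde x)-c)\,g''(\tilde t)<0$.
\end{itemize}
Thus $F\in\mathcal{R}_{\varphi,\tilde\Lambda}$. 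All earlier results were stated for an arbitrary horseshoe, so they apply to $\tilde\Lambda$.

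\emph{Step 4: intervals below $s$.} Take a periodic point $q\in\tilde\Lambda$ of period $k$ with $\tilde x\notin\mathcal{O}(q)$. Then $\bigcirc_{s=0}^{k-1}R_{\varphi^s(q)}=\text{rot}_{k\alpha}$ has irrational rotation number, and its conjugacy is the identity. Proposition \ref{rphi1} applied to $\tilde\Lambda$ then gives nontrivial intervals inside
$$\mathcal{L}_{\tilde\Lambda\times\mathbb{S}^1,\varphi\times\text{rot}_\alpha,F}\subset\mathcal{L}_{\Lambda\times\mathbb{S}^1,\varphi\times\text{rot}_\alpha,F}.$$
These intervals accumulate at $\max F|_{\tilde\Lambda\times\mathbb{S}^1}\in(0,s)$, so some of them lie in $(-\infty,s)$. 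Hence $\mathcal{L}\cap(-\infty,s)$ has nonempty interior, and its Hausdorff dimension is $1$.
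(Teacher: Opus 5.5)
Your proof is correct and follows the paper's argument essentially step for step. The steps are the same: Lagrange values are $\geq 0$ because the irrational rotation returns arbitrarily close to a minimum of $g$; Lemma \ref{discontinuities} gives a subhorseshoe $\tilde\Lambda$ with $c_{\varphi,f}<\max f|_{\tilde\Lambda}<c_{\varphi,f}+s/(\max g-\min g)$; $F_{fg}$ lies in $\mathcal{R}_{\varphi,\tilde\Lambda}$; and the interval result is applied on $\tilde\Lambda\times\mathbb{S}^1$. The deviations are cosmetic: you justify $f(\tilde x)>c_{\varphi,f}$ via $c_{\varphi,f}=\max\{t:HD(\Lambda_t)=0\}$ rather than via $HD(\ell_{\varphi,f}(\tilde\Lambda))=\min\{1,HD(\tilde\Lambda)\}$, and you cite Proposition \ref{rphi1} directly instead of Theorem \ref{principal1}.
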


\begin{proof}
Given $(x,t)\in S\times \mathbb{S}^1$ note that it is always possible to find a sequence $\{n_k \}_{k\in\mathbb{N}}$ such that $n_k\to\infinity$ and $R^{n_k}_{\alpha}(t)\to \bar{t}_0$, where $\bar{t}_0$ is any point satisfying $g(\bar{t_0})=\min g$. Therefore, we have $\ell_{\varphi\times {\text{rot}}_{\alpha}, F_{fg}}(x,t)\geq 0$ because $\lim\limits_{n\to\infinity}(f(\varphi^{n_k}(x))-c_{\varphi,f})\cdot(g(R^{n_k}(t))-\min g)=0$, since $\{f(\varphi^{n_k}(x))-c_{\varphi,f}\}_{k\in \mathbb{N}}$ is bounded. Then $\mathcal{L}_{\Lambda\times \mathbb{S}^1,\varphi\times {\text{rot}}_{\alpha},F_{fg}}\cap (-\infinity,0)=\emptyset$ and if $s\leq 0$, $L_{\Lambda\times \mathbb{S}^1,\varphi\times {\text{rot}}_{\alpha},F_{fg}}(s)=0$ (the equality remains true if $s=0$ because the function $L_{\Lambda\times \mathbb{S}^1,\varphi\times {\text{rot}} _{\alpha},F_{fg}}$ is continuous on the left). Let now $s>0$, as $\max g-\min g>0$, Lemma \ref{discontinuities} let us find a non-trivial subhorseshoe $\tilde{\Lambda}\subset \Lambda$  such that $\max f|_{\tilde{\Lambda}}<c_{\varphi,f}+\frac{s}{\max g-\min g}$. Note that $\max f|_{\tilde{\Lambda}}>c_{\varphi,f}$ since, in other case, $\ell_{\varphi,f}(\tilde{\Lambda})\subset (-\infinity,c_{\varphi,f}]$ and then
$$0<\min \{1, HD(\tilde{\Lambda}) \}=HD(\ell_{\varphi,f}(\tilde{\Lambda}))\leq HD(\mathcal{L}_{\Lambda,\varphi,f}\cap (-\infinity,c_{\varphi,f}))=L_{\Lambda,\varphi,f}(c_{\varphi,f})=0.$$
Additionally, $F_{fg}\in \mathcal{R}_{\varphi,\Lambda}$: we have that $F_{fg}|_{\tilde{\Lambda}\times \mathbb{S}^1}$ has only one point of maximum at $(\tilde{x},\tilde{t})$ where $\tilde{x}$ is the unique point of maximum of $f|_{\tilde{\Lambda}}$ and $\tilde{t}$ is the unique point of maximum of $g$ and
$$D(F_{fg})_{(\tilde{x},\tilde{t})}(e^{s,u}_{\tilde{x}})=Df_{\tilde{x}}(e^{s,u}_{\tilde{x}})\cdot (\max g-\min g)\neq 0,$$ 
$$\frac{\partial^2F_{fg}}{\partial t^2}(\tilde{x},\tilde{t})=(\max f|_{\tilde{\Lambda}}-c_{\varphi,f})\cdot \frac{d^2g}{d t^2}(\tilde{t})<0.$$
As $\max F_{fg}|_{\tilde{\Lambda}\times \mathbb{S}^1}=(\max f|_{\tilde{\Lambda}}-c_{\varphi,f})\cdot(\max g-\min g)<s$, Theorem $\ref{principal1}$ let us conclude that there is a non-trivial interval $I$ such that 
$$I\subset \mathcal{L}_{\tilde{\Lambda}\times \mathbb{S}^1,\varphi\times {\text{rot}}_{\alpha},F_{fg}}\subset\mathcal{L}_{\Lambda\times \mathbb{S}^1,\varphi\times {\text{rot}}_{\alpha},F_{fg}}\cap (-\infinity,\max F_{fg}|_{\tilde{\Lambda}\times\mathbb{S}^1})\subset\mathcal{L}_{\Lambda\times \mathbb{S}^1,\varphi\times {\text{rot}}_{\alpha},F_{fg}}\cap (-\infinity,s).$$
From this it follows that $L_{\Lambda\times \mathbb{S}^1,\varphi\times {\text{rot}} _{\alpha},F_{fg}}(s)=1$ as we wanted to see. 
\end{proof}

\begin{remark}
 If $(f,g)\in \mathcal{P}_{\varphi,\Lambda}\times R$ and 
 $$\mathcal{L}^{'}_{\Lambda\times \mathbb{S}^1,\varphi\times {\text{rot}}_{\alpha},F_{fg}}=\{x: x\ \textrm{is an accumulation point of}\ \mathcal{L}_{\Lambda\times \mathbb{S}^1,\varphi\times {\text{rot}}_{\alpha},F_{fg}}\},$$ then $0=\min \mathcal{L}^{'}_{\Lambda\times \mathbb{S}^1,\varphi\times {\text{rot}}_{\alpha},F_{fg}}=\min \mathcal{L}_{\Lambda\times \mathbb{S}^1,\varphi\times {\text{rot}}_{\alpha},F_{fg}}$ is accumulated on the right by non-trivial intervals contained in $\mathcal{L}_{\Lambda\times \mathbb{S}^1,\varphi\times {\text{rot}}_{\alpha},F_{fg}}.$
\end{remark}

\begin{remark}
 Observe that, once we fix some $\varphi\in\mathcal{U}^{*}$, the function $c_{\varphi}:\mathcal{P}_{\varphi,\Lambda}\rightarrow\mathbb{R}$ defined by $c_{\varphi}(f)=c_{\varphi,f}$ is continuous: Given $f\in \mathcal{P}_{\varphi,\Lambda}$ and $t\in \mathbb{R}$, denote by $\Lambda^f_t$ to the set $\Lambda^f_t=\{x\in \Lambda: m_{\varphi,f}(x)\leq t\}$. Given $f_1,f_2\in\mathcal{P}_{\varphi,\Lambda}$ such that $d(f_1,f_2)<\frac{\epsilon}{2}$, we have for $x\in \Lambda^{f_1}_t$ and $n\in \mathbb{Z}$, that $f_2(\varphi^n(x))-\frac{\epsilon}{2}<f_1(\varphi^n(x))\leq t$ and then $x\in \Lambda^{f_2}_{t+\frac{\epsilon}{2}}$. By symmetry, one concludes that
 $$\Lambda^{f_2}_{t-\frac{\epsilon}{2}}\subset\Lambda^{f_1}_t\subset\Lambda^{f_1}_{t+\frac{\epsilon}{2}}\subset\Lambda^{f_2}_{t+\epsilon}.$$
 If we take $t=c_{\varphi,f_1}$, we have that $HD(\Lambda^{f_2}_{c_{\varphi,f_1}-\frac{\epsilon}{2}})=0$ and $HD(\Lambda^{f_2}_{c_{\varphi,f_1}+\epsilon})\geq HD(\Lambda^{f_1}_{c_{\varphi,f_1}+\frac{\epsilon}{2}})\\ >0$. Therefore Lemma \ref{discontinuities} let us conclude that $c_{\varphi,f_1}-\frac{\epsilon}{2}\leq c_{\varphi,f_2}<c_{\varphi,f_1}+\epsilon$ and from this $\abs{c_{\varphi,f_1}-c_{\varphi,f_2}}<\epsilon$. In the same way, the function $g\mapsto \min g$ is continuous: If $d(g_1,g_2)\leq \epsilon$ then for all $t\in \mathbb{S}^1$, $\min g\leq g_1(t)\leq g_2(t)+\epsilon$. From this $\min g_1\leq \min g_2+\epsilon$, and by symmetry $\min g_2\leq \min g_1+\epsilon$, that is $\abs{\min g_1-\min g_2}\leq \epsilon$. Thus the function $F:\mathcal{P}_{\varphi,\Lambda}\times R\rightarrow C^r(S\times \mathbb{S}^1,\mathbb{R})$ given by $F(f,g)=F_{fg}$ is continuous.
 \end{remark}

\end{document}